\newcommand{\TR}{\textcolor{red}}
\newtheorem{thm}{Theorem}[section]
\newtheorem{lem}[thm]{Lemma}
\newtheorem{cor}[thm]{Corollary}
\newtheorem{pro}[thm]{Proposition}
\newtheorem{defn}[thm]{Definition}
\newtheorem{obs}[thm]{Observation}
\newtheorem{rem}[thm]{Remark}
\newtheorem{ex}[thm]{Example}
\newcommand{\RR}{\mathbb{R}}
\newcommand{\mnr}{\mathbf{M}_n\,(\mathbb{R})}
\newfont{\bb}{msbm10}
\def\1{{\sf 1}}
\def\0{{\sf 0}}
\def\qed{\hfill {\hbox{\footnotesize{$\Box$}}}}
\def\RR{\mathbb{R}}
\def\qed{\hfill {\hbox{\footnotesize{$\Box$}}}}
\def\0{{\sf {\sf 0}}}
\def\0{{\sf 0}}
\def\proof{{\noindent\bf Proof.}\hskip 0.3truecm}
\def\BBox{\kern  -0.2cm\hbox{\vrule width 0.15cm height 0.3cm}}
\def\S{\mathcal{S}}
\def\RR{\mathbb{R}}
\begin{document}
\begin{center} 
\begin{large} {\bf On an Analogue of a Property of Singular $M$-matrices, for the Lyapunov and the Stein Operators}
\end{large}	
\end{center}
		\begin{center}
		
			\it{A.M. Encinas $^1$, Samir Mondal $^2$ and K.C. Sivakumar $^2$
			}\\
   \end{center}
\footnotetext[1]{Department of Mathematics, Polytechnic University of Catalunya, Barcelona, Spain (andres.marcos.encinas@upc.edu).}

   \footnotetext[2]{Department of Mathematics, Indian Institute of Technology Madras,
			Chennai 600036, India (ma19d750@smail.iitm.ac.in, kcskumar@iitm.ac.in).}

		\begin{abstract}
			In the setting of real square matrices, it is known that, if $A$ is a singular irreducible $M$-matrix, then the only nonnegative vector that belongs to the range space of $A$ is the zero vector. In this paper, we prove an analogue of this result for the Lyapunov and the Stein operators.
		\end{abstract}
		
		\vskip.25in
		\textit{Keywords:} 
		$M$-matrix, Singular irreducible $M$-matrix, Almost monotonicity, Lyapunov operator, Stein operator.
		
		\textit{AMS Subject Classifications:}
		15A48, 
		15A23, 
		15A09, 
		15A18 
		
\newpage
	
\section{Introduction}

The set of all real matrices of order $n \times n$ will be denoted by $\mnr$. For a matrix $X \in \mnr$, we use $X\geq 0$ to denote the fact that all the entries of $X$ are nonnegative. If all the entries of $X$ are positive, we denote that by $X >0$. We use the same notation for vectors.

As usual, for any $A\in \mnr$, $N(A)$ and $R(A)$ denote the null and the range spaces of $A\in \mnr$, respectively. The  {\it rank} of $A$ is ${\rm rk}(A)$, the dimension of $R(A)$. In addition, $\rho(A)$ denotes the {\it spectral radius} of $A$; that is,  the maximum of the absolute values of its eigenvalues.

A matrix $A\in\mnr$ is said to be {\it reducible} if there
is a permutation matrix $P\in\mnr$ such that $P^{T}AP$ has
the form
\[%
\begin{bmatrix}
S_{11} & S_{12}\\
0 & S_{22}%
\end{bmatrix}
\]
for some square matrices $S_{11}$ and $S_{22}$ of order at least one. A matrix
is {\it irreducible} if it is not reducible. 

The {\it group inverse} of a matrix $A\in \mnr$ is the unique matrix $X \in \mnr$, if it exists, that satisfies the equations $AXA=A, XAX=X$ and $AX=XA$. If it exists, then the group inverse is denoted by $A^{\#}$. Of course, when $A$ is nonsingular, then $A^\#$ exists and moreover $A^\#=A^{-1}$.

A necessary and sufficient condition for the group inverse of a matrix $A$ to exist is that $A$ has index $1$; that is, $R(A^2)=R(A)$, which is equivalent to the condition $N(A^2)=N(A)$, see \cite[Theorem 2, Section 4.4]{bg}. Another characterization is that $A^{\#}$ exists iff $R(A)$ and $N(A)$ are complementary subspaces of $\mathbb{R}^n$. It is easy to show that any symmetric matrix has group inverse, that a nilpotent matrix $A$ (viz., $A^n=0$), does not have group inverse, whereas the group inverse of an idempotent matrix (viz., $A^2=A$), (exists and) is itself. We refer the reader to \cite[Chapter 4]{bg} for more details.

If ${\cal S}^n(\RR)$ is the subspace of symmetric matrices in $\mnr$, for $X \in {\cal S}^n(\RR),$ let us signify $X \succeq 0$ to denote the fact that $X$ is a positive semidefinite matrix. We use $X \succ 0,$ when $X$ is positive definite.

\begin{defn}
A matrix $A\in \mnr$ is called a $Z$-matrix, if all the off-diagonal entries of $A$ are nonpositive. Any $Z$-matrix $A$ has the representation $A=sI-B,$ where $s \geq 0$ and $B\ge 0$. If $s\ge \rho(B)$, then $A$ is called an {\it $M$-matrix}. 
\end{defn}

The notion of $M$-matrix  introduced by A. Ostrowski in 1935 in honor of H. Minkowski who worked with this class of matrices around 1900. $M$-matrices possess many interesting nonnegativity properties. For instance, in the representation as above, let $s > \rho(B).$ Then, $A=s\big(I-\frac{B}{s}\big)$ and considering the {\it Neumann series} for $\frac{B}{s}$; that is,
$$\Big(I-\frac{B}{s}\Big)^{-1}=\sum_{m=0}^{\infty} \Big(\frac{B}{s}\Big)^m \geq 0.$$ 
we conclude that $A$ is invertible and moreover $A^{-1}=\frac{1}{s}\Big(I-\frac{B}{s}\Big)^{-1}\ge 0$. In such a case, we shall refer to $A$ as an {\it invertible $M$-matrix}. Otherwise, we call $A$, a {\it singular $M$-matrix}. 

Here is a characterization for invertible $M$-matrices. For a proof, we refer the reader to the book \cite[Chapter 6]{berpl}, where fifty different characterizations are given.  

\begin{thm}\cite[Theorem 2.3, Chapter 6]{berpl}\label{matrixcase} \leavevmode  \\
Let $A$ be a $Z$-matrix. Then the following statements are equivalent:\\
(a) $A$ is an invertible $M$-matrix.\\
(b) There exists $x>0$ such that $Ax>0$.\\
(c) For every $q >0,$ there exiss $x>0$ such that $Ax=q.$\\
(d) $A$ is monotone, i.e. $Ax \geq 0 \Longrightarrow x\geq 0$.\\ 
$A$ is inverse-nonnegative, i.e. $A$ is invertible and $A^{-1}\ge 0$. \\
(f) $A$ is a $P$-matrix, i.e. all principal minors of $A$ are positive.\\
(g) $A$ is positive stable i.e. if the real part of each of its eigenvalues is positive. \\
Suppose that $A$ has the representation $A=sI-B$, with $B \geq 0$ and $s\ge \rho(B)$. 
Then each of the above statements is equivalent to:\\
(h) $s<\rho(B)$.\\
When in addition, $A$ is irreducible the above statements are equivalent to:\\
(e') $A$ is inverse-positive, i.e. $A$ is invertible and $A^{-1}> 0$.
\end{thm}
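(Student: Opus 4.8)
The plan is to prove the equivalence of (a)--(h) for the $Z$-matrix $A$ by running a cycle of implications organized around the representation $A = sI - B$ with $B \geq 0$, $s \geq \rho(B)$, using the Perron--Frobenius theorem for $B$ as the main tool. The equivalence (a) $\Leftrightarrow$ (h) (i.e. $s > \rho(B)$) is essentially the definition together with the Neumann-series computation already displayed in the text, which also yields (h) $\Rightarrow$ (e): $A^{-1} = \tfrac1s \sum_{m \geq 0}(B/s)^m \geq 0$. So the task reduces to threading the remaining conditions through (h).

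The elementary part of the cycle I would dispatch directly. (e) $\Rightarrow$ (d): if $A^{-1} \geq 0$ and $Ax \geq 0$ then $x = A^{-1}(Ax) \geq 0$. (d) $\Rightarrow$ (e): monotonicity forces injectivity, since $Ax = 0$ gives $x \geq 0$ and $-x \geq 0$, hence $x = 0$ and $A$ is invertible; applying $A(A^{-1}e_i) = e_i \geq 0$ together with monotonicity shows each column $A^{-1}e_i \geq 0$, so $A^{-1} \geq 0$. (e) $\Rightarrow$ (c): for $q > 0$ put $x = A^{-1}q \geq 0$; strict positivity follows because each row of the invertible matrix $A^{-1}$ is nonzero and nonnegative, hence pairs with $q>0$ to a positive entry. (c) $\Rightarrow$ (b) is trivial.

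The crux of the cycle is (b) $\Rightarrow$ (h). Take a left Perron eigenvector $y \geq 0$, $y \neq 0$ of $B$, so $y^{T}B = \rho(B)\,y^{T}$. Then $0 < y^{T}(Ax) = y^{T}(sI - B)x = (s - \rho(B))\,y^{T}x$, and $y^{T}x > 0$ because $x > 0$ and $y$ is nonnegative and nonzero; this forces $s > \rho(B)$. The spectral statement (g) I would attach through $\mathrm{Re}(s - \lambda) = s - \mathrm{Re}(\lambda) \geq s - \rho(B)$ for every eigenvalue $\lambda$ of $B$, with equality achieved at the Perron root $\lambda = \rho(B)$; hence every eigenvalue of $A$ has positive real part exactly when $s > \rho(B)$, giving (g) $\Leftrightarrow$ (h).

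For the $P$-matrix condition (f), the forward direction (a) $\Rightarrow$ (f) uses that each principal submatrix $A[\alpha]$ of the $Z$-matrix $A$ is again a $Z$-matrix, and that the witness $x > 0$ from (b) restricts to $x_\alpha > 0$ with $A[\alpha]x_\alpha > 0$ (deleting the nonpositive off-diagonal contributions only raises each surviving coordinate); thus $A[\alpha]$ is an invertible $M$-matrix and $\det A[\alpha] > 0$, as its eigenvalues have positive real part (the real ones positive, the complex ones in conjugate pairs with product $|\lambda|^{2} > 0$). The reverse (f) $\Rightarrow$ (h) is the delicate step I expect to be the main obstacle: one must recover the spectral-radius inequality from a purely determinantal hypothesis, with no nonnegative vector in hand. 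I would argue by continuity of $t \mapsto \det\big((s+t)I - B\big)$ on $t \geq 0$, which is positive for large $t$ and whose relevant real zero sits at $t = \rho(B) - s$; the $P$-matrix hypothesis at $t = 0$ pins this zero to be negative, forcing $s > \rho(B)$ (the Fiedler--Pt\'ak sign-change characterization of $P$-matrices is an alternative route). Finally, the irreducible refinement (e') follows because irreducibility of $A$, hence of $B$, makes the digraph of $B$ strongly connected, so some power $(B^{m})_{ij} > 0$ for every pair $(i,j)$, whence every entry of $A^{-1} = \tfrac1s\sum_{m \geq 0}(B/s)^m$ is strictly positive.
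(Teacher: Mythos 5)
The paper offers no proof of this theorem: it is quoted from Berman and Plemmons \cite{berpl} as background (the text explicitly defers to that book's ``fifty different characterizations''), so there is no in-paper argument to measure yours against. Your Perron--Frobenius cycle is the standard one and is essentially sound: (h) $\Rightarrow$ (e) via the Neumann series, (e) $\Leftrightarrow$ (d), (e) $\Rightarrow$ (c) $\Rightarrow$ (b), the left Perron eigenvector argument for (b) $\Rightarrow$ (h), the spectral computation for (g) $\Leftrightarrow$ (h), the restriction of the positive witness to principal submatrices for (a) $\Rightarrow$ (f), and the strong-connectivity argument for (e$'$) are all correct. You have also, rightly, read (h) as $s>\rho(B)$; the ``$s<\rho(B)$'' in the statement is a typo, as the paper's own definition of an invertible $M$-matrix shows.

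The one step that does not close as written is (f) $\Rightarrow$ (h). Continuity of $t\mapsto\det\bigl((s+t)I-B\bigr)$ together with positivity at $t=0$ and for large $t$ does not exclude a zero at some intermediate $t=\rho(B)-s\ge 0$; endpoint positivity of a continuous function says nothing about its interior values, so ``pins this zero to be negative'' is not yet an argument. You must use the full $P$-matrix hypothesis, not merely $\det A>0$, and the clean way is the expansion $\det(tI+A)=\sum_{k=0}^{n}E_{k}(A)\,t^{\,n-k}$, where $E_{k}(A)$ is the sum of the $k\times k$ principal minors of $A$ (and $E_{0}=1$): under (f) every coefficient is positive, so $\det(tI+A)\ge \det A>0$ for all $t\ge 0$, while $t=\rho(B)-s$ is a real zero of this polynomial; hence $\rho(B)-s<0$. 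Equivalently, the Fiedler--Pt\'ak sign-nonreversal characterization you mention, applied to a real Perron eigenvector $x$ of $B$ (for which $x_i(Ax)_i=(s-\rho(B))x_i^{2}\le 0$ for every $i$ whenever $s\le\rho(B)$), finishes the step. With that repair the proof is complete.
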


We will have the occasion to consider versions of Theorem \ref{matrixcase} for two special classes of operators on $S^n(\RR)$ (Theorem \ref{lyapunovcase} and Theorem \ref{steincase}), in the next section.

Next, let us turn our attention to the case singular $M$-matrices. A distinguished subclass of such matrices, due to their relevance in many applications, is the set of {\it singular irreducible $M$-matrices}. For such matrices, we us recall a well known result, which will serve to motivate the contents of this article.

\begin{thm}\label{berplthm}\cite[Theorem 4.16, Chapter 6]{berpl},\cite[Theorem 3]{plem}\leavevmode \\
Let $A \in \mnr$ be a singular irreducible $M$-matrix. Then the following hold:\\
(a) $\rm rk(A)=n-1.$\\
(b) There exists a vector $x >0$ such that $Ax=0.$\\
(c) $A^{\#}$ exists and is nonnegative on $R(A)$, i.e. $x \geq 0, x \in R(A) \Longrightarrow A^{\#}x \geq 0$.\\
(d) All the principal submatrices of $A$, except $A$ itself is an invertible $M$-matrix. \\
(e) $A$ is almost monotone, i.e. $Ax \geq 0 \Longrightarrow Ax=0.$
\end{thm}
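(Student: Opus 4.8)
The plan is to reduce every assertion to the Perron--Frobenius theory of the nonnegative matrix $B$ appearing in a representation $A=sI-B$ with $B\ge 0$ and $s\ge\rho(B)$. Two preliminary observations drive everything. First, since $A$ is singular, $0$ is an eigenvalue of $A$, so $s$ is an eigenvalue of $B$; combined with $s\ge\rho(B)$ this forces $s=\rho(B)$. Second, because $A$ and $B$ share the same off-diagonal zero pattern, $A$ irreducible implies that $B$ is an irreducible nonnegative matrix. Hence the Perron--Frobenius theorem applies to $B$: the value $\rho(B)=s$ is a simple eigenvalue, and there exist strictly positive right and left eigenvectors $u,v$ with $Bu=su$ and $v^{T}B=sv^{T}$, equivalently $Au=0$ and $v^{T}A=0$.

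From here, (a), (b) and (e) follow quickly. For (b), the vector $u>0$ already satisfies $Au=0$. For (a), the algebraic multiplicity of $0$ as an eigenvalue of $A$ equals that of $s=\rho(B)$ as an eigenvalue of $B$, which is $1$ by simplicity; hence $\dim N(A)=1$ and $\mathrm{rk}(A)=n-1$. For (e), suppose $Ax\ge 0$ and put $y=Ax$. Then $v^{T}y=(v^{T}A)x=0$, while $v>0$ and $y\ge 0$ force $y=0$; that is, $Ax=0$.

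For (d), I would use the strict monotonicity of the spectral radius under passage to proper principal submatrices of an irreducible nonnegative matrix. Any proper nonempty principal submatrix $A[\alpha]$ is again a $Z$-matrix, of the form $A[\alpha]=sI-B[\alpha]$. Viewing $B[\alpha]$ as a principal submatrix of the irreducible matrix $B$, one has $\rho(B[\alpha])<\rho(B)=s$: embedding $B[\alpha]$ into an $n\times n$ matrix $\widetilde{B}$ by padding with zeros gives $0\le\widetilde{B}\le B$ with $\widetilde{B}\ne B$, and irreducibility of $B$ yields the strict inequality $\rho(\widetilde{B})<\rho(B)$. Consequently $A[\alpha]$ is a $Z$-matrix of the form $sI-B[\alpha]$ with $s>\rho(B[\alpha])$, so it is an invertible $M$-matrix by Theorem \ref{matrixcase}.

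The substantive part is (c), which I expect to be the main obstacle. Existence of $A^{\#}$ is immediate: since $0$ is a simple, hence semisimple, eigenvalue of $A$, the index of $A$ is $1$ and $R(A),N(A)$ are complementary, so $A^{\#}$ exists. The nonnegativity on $R(A)$ is more delicate and is where I would invest the effort. The idea is a perturbation argument: for $\epsilon>0$ the matrix $A+\epsilon I=(s+\epsilon)I-B$ is a $Z$-matrix with $s+\epsilon>\rho(B)$, hence an invertible $M$-matrix, so $(A+\epsilon I)^{-1}\ge 0$. Decomposing $\mathbb{R}^{n}=N(A)\oplus R(A)$, the resolvent $(A+\epsilon I)^{-1}$ respects this splitting and, restricted to $R(A)$, converges to $(A|_{R(A)})^{-1}=A^{\#}|_{R(A)}$ as $\epsilon\to 0^{+}$. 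Therefore, for $x\in R(A)$ with $x\ge 0$, we obtain $A^{\#}x=\lim_{\epsilon\to 0^{+}}(A+\epsilon I)^{-1}x\ge 0$ as a limit of nonnegative vectors. The care needed is in justifying the convergence of the restricted resolvent and in identifying its limit with the group inverse on $R(A)$; this hinges precisely on $0$ being a simple eigenvalue, which the irreducibility hypothesis guarantees.
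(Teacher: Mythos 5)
The paper does not actually prove this theorem: it is quoted from Berman--Plemmons and Plemmons with citations only, so there is no internal proof to compare against line by line. Your argument is, as far as I can check, correct and complete modulo two standard Perron--Frobenius facts you invoke: simplicity of $\rho(B)$ with strictly positive left and right eigenvectors for an irreducible nonnegative $B$, and the strict monotonicity $\rho(C)<\rho(B)$ whenever $0\le C\le B$, $C\ne B$ and $B$ is irreducible. The reduction $s=\rho(B)$, the derivation of (a), (b), (e) from the positive eigenvectors, and the treatment of (d) via the zero-padded submatrix all follow the classical route of the cited sources. For (c), however, your resolvent argument is a genuinely different (and arguably cleaner) path than the one the paper alludes to: Berman--Plemmons establish ``property c'' (semiconvergence of $\frac{1}{s}B$) and Plemmons' Theorem 3 then converts this into nonnegativity of $A^{\#}$ on $R(A)$, whereas you obtain it directly from $(A+\epsilon I)^{-1}\ge 0$ together with the fact that, since $0$ is a semisimple eigenvalue, $(A+\epsilon I)^{-1}$ preserves the splitting $\mathbb{R}^n=N(A)\oplus R(A)$ and its restriction to $R(A)$ converges to $\left(A|_{R(A)}\right)^{-1}=A^{\#}|_{R(A)}$ by continuity of inversion at an invertible map. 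Two small points deserve an explicit line each, though neither is a gap: $-\epsilon$ is never an eigenvalue of $A$ (the eigenvalues of an $M$-matrix have nonnegative real part), so the restricted inverse exists for every $\epsilon>0$; and in (d) the padded matrix $\widetilde{B}$ really does differ from $B$, since otherwise an entire row and column of $B$ would vanish, contradicting irreducibility for $n\ge 2$.
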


We remark that in \cite[Theorem 4.16, Chapter 6]{berpl} statement (c) was established as $A$ has "property c", and that the equivalence with (c) in Theorem \ref{berplthm} was proved in \cite[Theorem 3]{plem}. Recall that a singular $M$-matrix $A$ is said to have "property c", if $A=sI-B$, where $B \geq 0, s>0$ and the matrix $\frac{1}{s}B$ is semi-convergent. Semi-convergence of a matrix $X$ means that the matrix sequence $\{X^k\}$ converges. Note that for an invertible $M$-matrix, such a sequence converges to zero. One of the most prominent situations where such matrices arise concerns irreducible Markov processes, where one deals with matrices of the form $I-T$, where $T$ is an irreducible column stochastic matrix. 
We refer the reader to \cite{li} for additional results that characterize singular irreducible $M$-matrices.

\begin{defn}
A matrix $A$ is referred to as {\it range monotone (see, for instance \cite{miskcs}}), if 
$$Ax \geq 0, x \in R(A) \Longrightarrow x \geq 0.$$ If $A$ is range monotone, then we say that $A$ has the {\it range monotonicity} property.
\end{defn}

Observe that range monotonicity of $A$ is equivalent to: $A^2x \geq 0 \Longrightarrow Ax\ge 0.$ Moreover, \cite[Theorem 3 (e)]{plem} shows that  statement $(c)$ of Theorem \ref{berplthm} is equivalent to the range monotonicity of $A$.

Let us recall a notion that is stronger than range monotonicity. This will play a central role in this article.

\begin{defn}
Let $A \in \mnr$. Then $A$ is called trivially range monotone if 
$$Ax \geq 0, x \in R(A) \Longrightarrow x = 0.$$
\end{defn}

Trivial range monotonicity of $A$ is the same as: $A^2x \geq 0 \Longrightarrow Ax=0.$ It is easy to verify that the matrix $A=\begin{pmatrix}
~~1 & -1 \\
-1 & ~~1
\end{pmatrix}$ is trivially range monotone. 

\begin{thm}\label{trivrangemon}
Let $A \in \mnr$ be a singular irreducible $M$-matrix. Then $A$ is trivially range monotone.
\end{thm}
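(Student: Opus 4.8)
The plan is to deduce the result directly from the two structural facts about singular irreducible $M$-matrices recorded in Theorem \ref{berplthm}, namely almost monotonicity (part (e)) and the existence of the group inverse (part (c)). The whole argument rests on reducing the hypothesis $Ax \ge 0$ to the much stronger conclusion $Ax = 0$, and then using complementarity of the relevant subspaces to finish; there are no nontrivial calculations involved.

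First I would take an arbitrary $x \in R(A)$ satisfying $Ax \ge 0$, and apply part (e) of Theorem \ref{berplthm}: since $A$ is almost monotone, $Ax \ge 0$ forces $Ax = 0$, so that $x \in N(A)$. This is the step that does the real work, because it converts the inequality constraint into the membership $x \in N(A)$. It is also where irreducibility enters essentially, since almost monotonicity can fail for reducible singular $M$-matrices.

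Next I would invoke the existence of $A^{\#}$, guaranteed by part (c) of Theorem \ref{berplthm}. As recalled in the introduction, $A^{\#}$ exists if and only if $R(A)$ and $N(A)$ are complementary subspaces of $\RR^n$; in particular $R(A) \cap N(A) = \{0\}$. Since we have already shown $x \in N(A)$ and we are given $x \in R(A)$, it follows that $x \in R(A) \cap N(A) = \{0\}$, whence $x = 0$. This establishes the trivial range monotonicity of $A$.

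I do not anticipate a genuine obstacle: the statement is essentially a repackaging of items (c) and (e) of Theorem \ref{berplthm}, and the only point requiring care is to invoke almost monotonicity first, rather than trying to argue directly from $Ax \ge 0$ and $x \in R(A)$ at the same time. One could alternatively phrase the final step using index $1$ directly (that is, $N(A^2) = N(A)$): writing $x = Ay$ for some $y$, the relation $Ax = 0$ gives $A^2 y = 0$, hence $Ay = 0$ and therefore $x = Ay = 0$. This avoids explicit mention of $A^{\#}$ while using the same underlying complementarity.
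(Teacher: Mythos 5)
Your argument is correct and coincides with the paper's own proof: both first apply almost monotonicity (Theorem \ref{berplthm}(e)) to conclude $Ax=0$, then use the existence of $A^{\#}$ to infer $R(A)\cap N(A)=\{0\}$ and hence $x=0$. The alternative ending via $N(A^2)=N(A)$ is a cosmetic variant of the same complementarity fact.
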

\proof
Suppose that $Ax \geq 0, x \in R(A).$ Then by the almost monotonicity of $A$ (item (e), Theorem \ref{berplthm}), it follows that $Ax=0$. So, $x \in N(A)\cap R(A)$ and since the group inverse $A^{\#}$ exists, we have $x=0.$ 
\qed

In this work, we shall be interested in the problem of determining trivial range monotonicity for the Lyapunov and the Stein operators on the space of real symmetric matrices. We consider four classes of matrices that give rise to specific instances of these operators, for which we give affirmative/negative answers. The main results are presented in Theorem \ref{idem_result}, Theorem \ref{gpinvexist} and Theorem \ref{trangemonotone}. Pertinent counter\-examples are presented in Section \ref{cex}, followed by a summary table consolidating the findings.

\section{Preliminaries}
A nonempty subset $C$ of a finite dimensional Hilbert space $H$ is said to be a {\it proper cone} if $C+C=C, \alpha C \subseteq C$, for all $\alpha \geq 0$, $C \cap -C =\{0\}$ and $C$ has a nonempty interior. Given a cone $C$, we define its {\it dual cone} $C^*$ by 
$$ C^*:=\{ y \in \mathbb{R}^n: \langle x,y\rangle \geq 0, ~\forall x \in C\}.$$
It follows that $C^*$ is indeed a proper cone. A well known example of a proper cone is the nonnegative orthant $\mathbb{R}^n_+$, viz., the cone of nonnegative vectors. Let the space of all real symmetric matrices of order $n,$ be denoted by ${\cal S}^n$. Then ${\cal S}^n$ is a real Hilbert space with the trace inner product, i.e., $\langle A,B \rangle = tr(AB),~A,B \in {\cal S}^n.$ In ${\cal S}^n,$ the set of all positive semidefinite matrices denoted by ${\cal S}^n_+$, is a cone. Both these cones satisfy the condition of self-duality, viz., $C^*=C,$ when $C=\mathbb{R}^n_+$ or ${\cal S}^n_+$.

Next, let us observe that if $A$  is a $Z$-matrix, then $\langle Ae_j,e_i \rangle = a_{ij} \leq 0 ,\;i\neq j$, where $e_i$ denotes the $i$th standard basis vector of $\mathbb{R}^n$. These basis vectors are mutually orthogonal and positive, as well. It then follows that an equivalent formulation for a matrix $A$ to be a $Z$-matrix is:
\begin{center}
$x \geq 0, \,y \geq 0$\; and \;$\langle x,y \rangle =0 \;\Longrightarrow \; \langle Ax,y \rangle \leq 0$.
\end{center}
This in turn is the same as saying (with $C=\mathbb{R}^n_+$):
\begin{center}
$x \in C, \,y \in C^*$ \; and \;$\langle x,y \rangle =0 \; \Longrightarrow \; \langle Ax,y \rangle \leq 0$.
\end{center}

Motivated by the reformulation above, a linear operator $T:{\cal S}^n \longrightarrow {\cal S}^n$ is called a {\it $Z$-operator} if it satisfies: 
\begin{center}
$X \succeq 0, Y \succeq 0$ \; and \;$\langle X,Y \rangle =0 \; \Longrightarrow \; \langle T(X),Y \rangle \leq 0$,
\end{center}
where $U \succeq 0$ stands for the fact that the symmetric matrix $U$ is positive semidefinite. When $-U \succeq 0,$ we will use the notation $U \preceq 0.$
In this article, we shall be concerned with two important classes of operators on ${\cal S}^n$. These are the Lyapunov operator and the Stein operator. Let us first look at their definitions.
Given $A \in \mnr$, the {\it Lyapunov operator} $L_A$ on ${\cal S}^n$ is the operator
  \[  L_A(X):=AX+XA^T, \;X \in {\cal S}^n \]
and the {\it Stein operator} $S_A$ is defined by
  \[S_A(X):=X-AXA^T, \;X \in {\cal S}^n.\]

Let $L$ denote either the Lyapunov or the Stein operator for a given matrix $A$. Then $L$ satisfies the following (independent of $A$):
\begin{center}
$X \succeq 0,\, Y \succeq 0$ \; and \; $\langle X,Y \rangle =0 \; \Longrightarrow \; \langle L(X),Y \rangle \leq  0.$
\end{center}
Thus, both these operators could be thought of as analogues of $Z$-matrices, for linear maps on ${\cal S}^n$. 

An operator $T$ on ${\cal S}^n$ will be called a {\it positive stable $Z$-operator} if $T$ is an invertible $Z$-operator and $T^{-1}({\cal S}^n_+) \subseteq {\cal S}^n_+$. Note that this generalizes what we know for an invertible $M$-matrix, namely that it is invertible and that its inverse is nonnegative, i.e. leaves the cone $\mathbb{R}^n_+$ invariant.

The following result is a version of (items (b) and (c) of) Theorem \ref{matrixcase} for the Lyapunov operator \cite{gowsong}. The notation $U \succ 0$ denotes the fact that the symmetric matrix $U$ is positive definite. 

\begin{thm} \label{lyapunovcase} (\cite[Theorem 5]{gowsong}) \leavevmode  \\
For $A \in \mnr$, the following statements are equivalent:\\
(a) There exists $E \succ 0$ such that $L_A(E) \succ 0$.\\
(b) For every $Q \succ 0$ there exists $X \succ 0$ such that $L_A(X)=Q$.\\
(c) $A$ is positive stable.
\end{thm}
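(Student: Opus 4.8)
The plan is to establish the cyclic chain $(a) \Rightarrow (c) \Rightarrow (b) \Rightarrow (a)$. The step $(b) \Rightarrow (a)$ is immediate: applying $(b)$ with the choice $Q = I \succ 0$ produces an $X \succ 0$ with $L_A(X) = I \succ 0$, so $E := X$ witnesses $(a)$. The remaining two implications carry the content.

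For $(a) \Rightarrow (c)$, I would argue through the spectrum of $A$ by means of a left eigenvector. Suppose $E \succ 0$ satisfies $L_A(E) = AE + EA^T \succ 0$, and let $\lambda \in \mathbb{C}$ be an arbitrary eigenvalue of $A$. Since $A$ and $A^T$ share the same spectrum, I may pick a (possibly complex) vector $w \neq 0$ with $A^T w = \lambda w$; because $A$ is real this is equivalent to $w^* A = \bar\lambda\, w^*$. Evaluating the Hermitian form $w^* L_A(E)\, w$ and substituting both relations makes each term act diagonally, collapsing it to
\[
w^* L_A(E)\, w = w^* A E w + w^* E A^T w = (\lambda + \bar\lambda)\, w^* E w = 2\,\mathrm{Re}(\lambda)\, w^* E w .
\]
Since $L_A(E)$ and $E$ are real symmetric and positive definite, both $w^* L_A(E)\, w$ and $w^* E w$ are strictly positive real numbers for $w \neq 0$ (a complex vector yields a positive value against a positive definite real symmetric matrix, as one sees by splitting $w$ into real and imaginary parts). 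Hence $\mathrm{Re}(\lambda) > 0$, and as $\lambda$ was arbitrary, $A$ is positive stable.

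For $(c) \Rightarrow (b)$, I would use the integral (Lyapunov) representation. Positive stability of $A$ makes $-A$ a Hurwitz matrix, so there exist constants $C, \alpha > 0$ with $\|e^{-tA}\| \le C e^{-\alpha t}$; consequently, for any $Q \succ 0$, the integral
\[
X := \int_0^\infty e^{-tA} Q\, e^{-tA^T}\, dt
\]
converges absolutely. Setting $X_t := e^{-tA} Q\, e^{-tA^T}$, one has $\tfrac{d}{dt} X_t = -(A X_t + X_t A^T)$, so integrating from $0$ to $\infty$ and using $e^{-tA} \to 0$ gives $A X + X A^T = Q$, that is, $L_A(X) = Q$. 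Symmetry of $X$ is clear from the integrand, and since each $X_t$ is positive definite and $t \mapsto u^T X_t u$ is continuous and strictly positive for $u \neq 0$, the integral satisfies $X \succ 0$.

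I expect the main obstacle to lie in the $(c) \Rightarrow (b)$ direction: one must carefully extract the exponential decay bound on $\|e^{-tA}\|$ from positive stability so that the integral converges, and then confirm both that the constructed $X$ actually solves the Lyapunov equation and that it is genuinely positive definite rather than merely semidefinite. By contrast, the $(a) \Rightarrow (c)$ step, though conceptually the crux, reduces to the short eigenvector computation above once one uses a \emph{left} eigenvector (an eigenvector of $A^T$), which is exactly what forces both summands of $L_A(E)$ to act as scalars.
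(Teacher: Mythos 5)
The paper does not prove this theorem; it is quoted verbatim from Gowda--Song \cite[Theorem 5]{gowsong} as a known preliminary, so there is no internal proof to compare against. Your argument is the classical one for Lyapunov's theorem and it is correct: the left-eigenvector computation $w^* L_A(E) w = 2\,\mathrm{Re}(\lambda)\, w^* E w$ for $(a) \Rightarrow (c)$ is sound (including the observation that $w^* M w = u^T M u + v^T M v > 0$ for real symmetric $M \succ 0$ and complex $w = u + iv \neq 0$), and the integral construction $X = \int_0^\infty e^{-tA} Q e^{-tA^T}\, dt$ for $(c) \Rightarrow (b)$ correctly yields a symmetric, positive definite solution of $L_A(X) = Q$ via the exponential decay bound furnished by the Hurwitz property of $-A$. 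This is the standard textbook route and fully establishes the stated equivalences.
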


A version for the Stein operator is stated next. The original result holds for complex matrices \cite{gowtp}. We present only the real case. Recall that a matrix $A$ is called Schur stable if all its eigenvalues lie in the open unit disc of the complex plane.

\begin{thm} \label{steincase} (\cite[Theorem 11]{gowtp})\leavevmode  \\
For $A \in \mnr$, the following statements are equivalent:\\
(a) There exists $E \succ 0$ such that $S_A(E) \succ0$.\\
(b) For every $Q \succ 0$ there exists $X \succ 0$ such that $S_A(X)=Q$.\\
(c) $A$ is Schur stable.
\end{thm}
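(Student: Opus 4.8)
The plan is to prove the cyclic chain of implications $(c) \Rightarrow (b) \Rightarrow (a) \Rightarrow (c)$, exploiting the discrete-time (Stein) analogue of the Neumann-series argument that the excerpt already used for invertible $M$-matrices. For $(c) \Rightarrow (b)$, I would start from $\rho(A) < 1$ and invoke Gelfand's formula to obtain constants $C > 0$ and $r \in (0,1)$ with $\|A^k\| \le C r^k$ for all $k$. This makes the series
$$X := \sum_{k=0}^{\infty} A^k Q (A^T)^k$$
converge absolutely in ${\cal S}^n$ for any $Q \succ 0$. A telescoping computation then yields $S_A(X) = X - AXA^T = Q$, and for nonzero $x \in \RR^n$ the bound $x^T X x = \sum_{k\ge 0} ((A^T)^k x)^T Q ((A^T)^k x) \ge x^T Q x > 0$ (the $k=0$ term alone) shows $X \succ 0$, giving (b).

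The implication $(b) \Rightarrow (a)$ is immediate: apply (b) to $Q = I \succ 0$ to obtain $E := X \succ 0$ with $S_A(E) = I \succ 0$. The substantive direction is $(a) \Rightarrow (c)$. Here I would assume $E \succ 0$ with $E - A E A^T \succ 0$ and take an arbitrary eigenvalue $\lambda \in \CC$ of $A$, realized by a unit eigenvector $w$ of $A^T$, so $A^T w = \lambda w$ and hence $w^* A = \bar\lambda w^*$ (using that $A$ is real). Since $E - AEA^T$ is real symmetric and positive definite, the Hermitian form $w^*(E - AEA^T)w$ is strictly positive, while $w^* A E A^T w = |\lambda|^2 \, w^* E w$, so that
$$0 < w^*(E - AEA^T)w = (1 - |\lambda|^2)\, w^* E w.$$
As $w^* E w > 0$, this forces $|\lambda| < 1$; since $\lambda$ was arbitrary, $\rho(A) < 1$ and $A$ is Schur stable.

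The hard part will be $(a) \Rightarrow (c)$, and specifically the passage from the real positive-definiteness of $S_A(E)$ to a strict inequality along a possibly complex eigenvector: this requires extending $E \succ 0$ to the Hermitian form $z \mapsto z^* E z$ and carefully tracking complex conjugates when $A$ has non-real eigenvalues. By contrast, the convergence and telescoping in $(c) \Rightarrow (b)$ are routine once the Gelfand bound is in hand, and $(b) \Rightarrow (a)$ is trivial.
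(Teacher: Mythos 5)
Your proposed proof is correct. Note first that the paper does not prove this statement at all: it is quoted verbatim from \cite[Theorem 11]{gowtp} and used as a black box, so there is no in-paper argument to compare yours against. What you have written is the standard proof of the discrete-time Lyapunov (Stein) theorem, and every step checks out: the Gelfand bound makes $X=\sum_{k\ge 0}A^kQ(A^T)^k$ absolutely convergent, the telescoping gives $S_A(X)=Q$, and the $k=0$ term gives $X\succ 0$; taking $Q=I$ gives $(b)\Rightarrow(a)$; and for $(a)\Rightarrow(c)$ the eigenvector computation $w^*(E-AEA^T)w=(1-|\lambda|^2)\,w^*Ew$ is exactly right. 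The one point you flag as delicate --- evaluating a real positive definite form on a complex eigenvector --- is in fact routine: for real symmetric $M\succ 0$ and $w\neq 0$ one has $w^*Mw=(\mathrm{Re}\,w)^TM(\mathrm{Re}\,w)+(\mathrm{Im}\,w)^TM(\mathrm{Im}\,w)>0$, since at least one of the real and imaginary parts is nonzero; the same identity gives $w^*Ew>0$, and the conclusion $|\lambda|<1$ follows. So your argument is complete as it stands and supplies a proof the paper omits.
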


These results may be considered as analogues of Theorem \ref{matrixcase}. Let $L$ stand for either the Lyapunov operator or the Stein operator. Then, $(b)$ of Theorem \ref{lyapunovcase} or Theorem \ref{steincase} states that the range of the operator $L$ contains the open set of all (symmetric) positive definite matrices. Hence $L$ is surjective and so, it is injective, too. Thus, $L$ is invertible. By the same statement, it also follows that $L^{-1}({\cal S}^n_+) \subseteq {\cal S}^n_+$. Thus, these two operators are examples of positive stable $Z$-operators.

\section{Main Results}
We are interested in identifying some matrix classes for which the Lyapunov operator and/or the Stein operator are/is trivially range monotone. Let us define this notion, first. Motivated by the definitions for matrices, we call a linear operator $T:{\cal S}^n \rightarrow {\cal S}^n$ {\it range monotone} if $$T(X) \succeq 0, X \in R(T) \Longrightarrow X \succeq 0$$ and refer to $T$ as {\it trivially range monotone} if $$T(X) \succeq 0, X \in R(T) \Longrightarrow X =0. $$
As in the case of matrices, one may observe that range monotonicity of a linear operator $T$ is equivalent to the condition: $T^2(X) \succeq 0 \Longrightarrow T(X) \succeq 0$ and that trivial range monotonicity is the same as: $T^2(X) \succeq 0 \Longrightarrow T(X) =0.$

Next, we give an example of a range monotone operator and also one which is not range monotone. 

\begin{ex}
Let $A=
\begin{pmatrix}
1 & 1 \\
0 & 0
\end{pmatrix}.$ The corresponding Lyapunov operator is given by $$L_A(X)=
\begin{pmatrix}
2(x_{11}+x_{12}) & x_{12}+x_{22} \\
x_{12}+x_{22} & 0
\end{pmatrix},$$
where $X= \begin{pmatrix}
x_{11} & x_{12} \\
x_{12} & x_{22}
\end{pmatrix}.$ The choice $x_{11}=x_{22}=1, x_{12}=-1$ shows that $L_A$ is not invertible. It is easily seen that if $X \in R(L_A)$, then $x_{22}=0.$ Next, for such an $X$, if $L_A(X)$ is a positive semidefinite matrix, then $X= \begin{pmatrix}
x_{11} & 0 \\
0 & 0
\end{pmatrix}$ is also positive semidefinite. Thus, $L_A$ is a range monotone operator. It is not trivially range monotone, as $X= \begin{pmatrix}
1 & 0 \\
0 & 0
\end{pmatrix} \in R(L_A) \cap {\cal S}^n_+.$
\end{ex}

\begin{ex}\label{remstein}
Let $A=
\begin{pmatrix}
1 & 1 \\
0 & 1
\end{pmatrix}.$ Then the associated Stein operator is given by 
$$S_A(X)=-
\begin{pmatrix}
2b +c & c \\
c & 0
\end{pmatrix},$$
given $X=
\begin{pmatrix}
a & b \\
b & c
\end{pmatrix}.$
Set $X=\begin{pmatrix}
-1 & 0 \\
~~0 & 0
\end{pmatrix}$ and $Y=\begin{pmatrix}
0 & \frac{1}{2} \\
\frac{1}{2} & 0
\end{pmatrix}.$ Then $S_A(Y)=X,$ so that $X \in R(S_A)$. Further, $S_A(X)=0$. This shows that $S_A$ is not range monotone. 
\end{ex}

It is easy to prove that if an operator is idempotent, then it is range monotone. The only fact that is used, is that an idempotent operator acts like the identity operator on its range space. Let us record this statement. 

\begin{lem}\label{idemrangemonotone}
Let $T:V \rightarrow V$ be idempotent. Then $T$ is range monotone.
\end{lem}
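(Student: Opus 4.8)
The plan is to use the defining property of an idempotent operator directly. Since $T$ is idempotent, $T^2 = T$, and I want to show $T$ is range monotone, i.e. that $T(X) \succeq 0$ together with $X \in R(T)$ forces $X \succeq 0$.

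First I would observe the key structural fact about idempotents: $T$ restricts to the identity on its range space. Concretely, if $X \in R(T)$, then $X = T(W)$ for some $W \in {\cal S}^n$, and applying idempotency gives
\[
T(X) = T(T(W)) = T^2(W) = T(W) = X.
\]
So every element of $R(T)$ is a fixed point of $T$.

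With this identity in hand, the verification of range monotonicity is immediate. Suppose $X \in R(T)$ and $T(X) \succeq 0$. By the fixed-point observation above, $T(X) = X$, and therefore $X = T(X) \succeq 0$. This is exactly the conclusion required by the definition of range monotonicity, so $T$ is range monotone.

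I do not anticipate any real obstacle here; the entire content of the lemma is the elementary remark that an idempotent operator acts as the identity on its range, which the authors have already flagged in the sentence preceding the statement. The only point worth stating carefully is that $R(T) = \{X : T(X) = X\}$ for an idempotent, so that the hypothesis $X \in R(T)$ genuinely licenses the substitution $T(X) = X$; everything else is a one-line implication.
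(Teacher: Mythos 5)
Your argument is correct and is exactly the one the paper intends: the authors state explicitly that the only fact used is that an idempotent operator acts as the identity on its range, and your computation $T(X) = T^2(W) = T(W) = X$ for $X = T(W) \in R(T)$ is the precise justification of that fact, after which range monotonicity is immediate.
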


As a first step, we address the question of when the Lyapunov and the Stein operators are idempotent. The relevant results are Theorem \ref{Ly:ch} and Theorem \ref{St:ch}. It appears that this question has not been addressed, to the best of our knowledge.

Let us fix some more notation. The all ones vector in $\RR^n$ is denoted by $e$. For any $j=1,\ldots,n$, we define the symmetric matrix $E_{jj}=[0,\ldots,\stackrel{j\atop \downarrow}{e_j},\ldots,0]$ and for any $1\le i<j\le n$ we also define the symmetric matrix $E_{ij}=[0,\ldots,\stackrel{i\atop \downarrow}{e_j},\ldots,\stackrel{j\atop \downarrow}{e_i},\ldots,0]$. It is clear that $\{E_{ij}\}_{1\le i\le j\le n}$ is a basis of $\S^n$. Thus, if $A=[c_1,\ldots,c_n]\in \mnr$, we have,  $AE_{jj}=[0,\ldots,\stackrel{j\atop \downarrow}{c_j},\ldots,0]$, for any $j$ whereas, $AE_{ij}=[0,\ldots,\stackrel{i\atop \downarrow}{c_j},\ldots,\stackrel{j\atop \downarrow}{c_i},\ldots,0]$, for any $1\le i<j\le n$.\\

In what follows, we collect some properties of the Lyapunov operator. We only prove the second item, as the others are straightforward. 

\begin{lem}
\label{Ly:properties1}\leavevmode \\
(a) For any $A,B\in \mnr$ and any $s,t\in \RR$,  $L_{tA+sB}=tL_A+sL_B$.\\
(b) $L_A$ is the zero operator iff $A=0$ and $L_A$ is the identity operator iff $A=\frac{1}{2}I$.\\
(c) For any $A\in \mnr$, $L_A(I)=A+A^T$ and hence $L_A(A+A^T)=L_A^2(I).$\\
(d) For any orthogonal matrix $P$, we have: $L_{PAP^T}(PXP^T)=PL_A(X)P^T.$
\end{lem}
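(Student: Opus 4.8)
The plan is to prove item (b); the other three items are immediate from the definition $L_A(X)=AX+XA^T$ and I would leave them to the reader. In each of the two stated equivalences the ``if'' direction is a one-line substitution: if $A=0$ then $L_A(X)=0$ for every $X$, and if $A=\tfrac12 I$ then $L_A(X)=\tfrac12 X+\tfrac12 X=X$, so $L_{\frac12 I}$ is the identity operator. The content therefore lies in the two ``only if'' directions, and the idea is to exploit the fact that it suffices to test the operator on the symmetric basis $\{E_{ij}\}_{1\le i\le j\le n}$ introduced above, together with the column formulas recorded there.

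First I would settle the zero case. Assume $L_A$ is the zero operator, so in particular $L_A(E_{jj})=0$ for each $j$. Using $AE_{jj}=[0,\ldots,c_j,\ldots,0]$, with $c_j$ (the $j$th column of $A$) in the $j$th slot, and the symmetry of $E_{jj}$, one obtains $L_A(E_{jj})=c_je_j^T+e_jc_j^T$. Reading off the $j$th column of this matrix gives $c_j+a_{jj}e_j=0$: the off-diagonal components force $a_{ij}=0$ for $i\ne j$, while the diagonal component forces $2a_{jj}=0$, so that $c_j=0$. Letting $j$ range over $1,\ldots,n$ shows $A=0$, which is exactly what is needed.

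Then the identity case follows from the zero case together with linearity (item (a)), with essentially no extra computation. Since $L_{\frac12 I}$ is the identity operator (shown above), and since $L_{A-\frac12 I}=L_A-\tfrac12 L_I=L_A-L_{\frac12 I}$ by (a) together with $\tfrac12 L_I(X)=X$, the hypothesis that $L_A$ is the identity operator is equivalent to $L_{A-\frac12 I}$ being the zero operator. Applying the zero case to the matrix $A-\tfrac12 I$ then yields $A-\tfrac12 I=0$, i.e.\ $A=\tfrac12 I$. I do not expect a genuine obstacle here: the only point requiring care is the bookkeeping in the column computation of $L_A(E_{jj})$, namely the observation that testing against the diagonal basis matrices $E_{jj}$ (rather than the full basis) already pins down an entire column of $A$ from a single equation.
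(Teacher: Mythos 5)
Your proof is correct and follows essentially the same route as the paper: both hinge on evaluating $L_A(E_{jj})$ as $c_je_j^T+e_jc_j^T$ and reading off its $j$th column to pin down the $j$th column of $A$. The only (harmless) difference is that the paper treats the identity case by the same direct computation (obtaining $c_{ij}=0$ for $i\ne j$ and $2c_{jj}=1$), whereas you reduce it to the zero case via linearity applied to $A-\tfrac12 I$; both are fine.
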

\proof
(b) Let $c_j=(c_{1j},\ldots,c_{nj})^T$ be the $j$-th column of $A$, for $1 \leq j \leq n.$ Then 
$$L_A(E_{jj})=[0,\ldots,c_j,\ldots,0]+[0,\ldots,c_j,\ldots,0]^T.$$
Therefore, when $L_A=0,$ we infer that $c_{ij}=0$, $j\not=i$ and $2c_{jj}=0$. When $L_A=I$, then  $c_{ij}=0$, $j\not=i$ and $2c_{jj}=1$.
\qed
  
\begin{defn}
A square matrix $A\in \mnr$ is called {\it Lyapunov idempotent} or {\it $L$-idempotent}, for short, iff $L_{A}$ is idempotent; that is, $L_A^2=L_A$.
\end{defn}

It is clear that $A$ is $L$-idempotent when $A=0$ and also when $A=\frac{1}{2}I$. Moreover, when $L_A$ is idempotent, then $L_{tA}$ is not, except when $A=0$ or $t\neq 0,1$: Since $L_{tA}=tL_A$ for any $t\in \RR$, we have that $L_{tA}^2=t^2L_A^2=t^2L_A$. Therefore, if $tA$ is $L$-idempotent, then $tL_A=L_{tA}=L_{tA}^2=t^2L_A$, which implies that either $t=0$ or $L_A=tL_A$. When $t\not=0$, the above equality implies that either $t=1$ or $L_A=0$, which is equivalent to: $A=0$. 

\begin{thm}
\label{Ly:ch}
A matrix $A\in \mnr$ is $L$-idempotent iff it is diagonal and  moreover $\sigma(A)\subseteq \big\{0,\frac{1}{2}\big\}$. In particular, the only nonsingular $L$-idempotent matrix is $\frac{1}{2}I$.
\end{thm}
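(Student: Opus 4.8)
The plan is to reduce the question entirely to the spectral structure of $A$, using the orthogonal invariance recorded in Lemma \ref{Ly:properties1}(d) together with the elementary fact that an operator is idempotent precisely when it is diagonalizable with spectrum contained in $\{0,1\}$. First I would observe that, by Lemma \ref{Ly:properties1}(d), the invertible map $X\mapsto PXP^T$ conjugates $L_A$ into $L_{PAP^T}$ for every orthogonal $P$; since conjugation by an invertible operator preserves idempotency, $A$ is $L$-idempotent if and only if $PAP^T$ is. Thus $L$-idempotency depends only on the orthogonal similarity class of $A$, which already signals that the eigenvalues of $A$ should govern everything.

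The heart of the argument is to read the eigenvalues of $L_A$ off those of $A$ by passing to the complexification, i.e.\ letting $L_A$ act on complex symmetric matrices $Z=Z^T$, on which $Z\mapsto AZ+ZA^T$ is still well defined and still idempotent. For a complex eigenpair $Av=\lambda v$, the nonzero complex symmetric matrix $vv^T$ satisfies $L_A(vv^T)=2\lambda\,vv^T$, so $2\lambda$ is an eigenvalue of the idempotent operator $L_A$, whence $2\lambda\in\{0,1\}$ and $\lambda\in\{0,\frac12\}$; this gives $\sigma(A)\subseteq\{0,\frac12\}$. Next, for two eigenvectors $Av=\lambda v$, $Au=\mu u$ with $\lambda\neq\mu$, the rank-two symmetric matrix $vu^T+uv^T$ is an eigenvector of $L_A$ with eigenvalue $\lambda+\mu$, forcing $\lambda+\mu\in\{0,1\}$; since $0+\frac12=\frac12\notin\{0,1\}$, the values $0$ and $\frac12$ cannot both occur, so $\sigma(A)$ is a single point. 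I expect the main obstacle to be ruling out a non-diagonalizable $A$: a Jordan chain $Av=\lambda v$, $Aw=\lambda w+v$ produces, through $Z_1=vv^T$ and $Z_2=vw^T+wv^T$, the relation $L_A(Z_2)=2\lambda Z_2+2Z_1$, which exhibits a genuine length-two Jordan chain of $L_A$ at $2\lambda$ and contradicts the diagonalizability forced by idempotency. Hence $A$ is diagonalizable with a one-point spectrum, so $A$ is the scalar matrix $0$ or $\frac12 I$.

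Finally I would assemble the conclusion. The forward direction above shows that any $L$-idempotent $A$ equals $0$ or $\frac12 I$, each of which is diagonal with spectrum in $\{0,\frac12\}$. For the reverse implication, the two surviving matrices are settled directly by Lemma \ref{Ly:properties1}(b), since $L_0$ is the zero operator and $L_{\frac12 I}$ is the identity, both visibly idempotent; more generally, computing $L_A$ on the basis $\{E_{ij}\}$ shows that for a diagonal $A=\mathrm{diag}(d_1,\dots,d_n)$ one has $L_A(E_{ij})=(d_i+d_j)E_{ij}$, so idempotency is exactly the requirement $d_i+d_j\in\{0,1\}$ for all $i,j$, which pins the diagonal entries down. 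The ``in particular'' statement is then immediate: a nonsingular matrix that is diagonalizable with one-point spectrum in $\{0,\frac12\}$ must have that eigenvalue equal to $\frac12$, forcing $A=\frac12 I$.
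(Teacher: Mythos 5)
Your argument is correct, and it takes a genuinely different route from the paper's. The paper works entry-wise: it evaluates the identity $L_A^2(X)=L_A(X)$ at $X=E_{jj}$ to force the off-diagonal entries of each column of $A$ to vanish (so $A$ is diagonal), and then at $X=I$ to get $4A^2=2A$, i.e.\ $d_j(2d_j-1)=0$ for each diagonal entry, and stops there. You instead read everything off the spectrum of the complexified operator: the eigenvectors $vv^T$ and $vu^T+uv^T$ of $L_A$ with eigenvalues $2\lambda$ and $\lambda+\mu$, the fact that an idempotent operator is diagonalizable with spectrum in $\{0,1\}$, and the Jordan-chain computation to exclude non-semisimple $A$. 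What your approach buys is decisive: the test elements $vu^T+uv^T$ (equivalently, the basis matrices $E_{ij}$ with $i\neq j$ for diagonal $A$, on which $L_A$ acts by $d_i+d_j$) yield the extra constraint $\lambda+\mu\in\{0,1\}$, which forbids the eigenvalues $0$ and $\tfrac{1}{2}$ from occurring together. You therefore prove the sharper characterization that $A$ is $L$-idempotent iff $A=0$ or $A=\tfrac{1}{2}I$.

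This sharper conclusion exposes the fact that the ``if'' direction of Theorem \ref{Ly:ch} as literally stated is false: for $A=\mathrm{diag}\bigl(0,\tfrac{1}{2}\bigr)$ one has $L_A(E_{12})=\tfrac{1}{2}E_{12}$, hence $L_A^2(E_{12})=\tfrac{1}{4}E_{12}\neq L_A(E_{12})$, so this diagonal matrix with $\sigma(A)\subseteq\{0,\tfrac{1}{2}\}$ is not $L$-idempotent. The paper's own proof only establishes the forward implication (it never tests the identity on $E_{ij}$ for $i\neq j$ and never verifies the converse), so the discrepancy is a defect of the stated theorem, not of your reasoning; Theorem \ref{idem_result}(a) inherits the same issue and should be restated for $A\in\{0,\tfrac{1}{2}I\}$, where it is immediate from Lemma \ref{Ly:properties1}(b).
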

\proof First, $A$ is $L$-idempotent iff for any $X\in \S^n$ we have that 
$$A^2X+2AXA^T+X(A^T)^2=AX+XA^T.$$
In particular, if $A^2=[\hat c_1,\ldots,\hat c_n]$, choosing $X=E_{jj}$, $j=1,\ldots,n$, we obtain, for the left hand side, the expression 
$$[0,\ldots,\hat c_j,\ldots,0]+[0,\ldots,\hat c_j,\ldots,0]^T+2[c_{1j} c_j,\ldots,c_{jj} c_j,\ldots,c_{nj} c_j]^T,$$
whereas, the right hand side reduces to 
$$[0,\ldots,c_j,\ldots,0]+[0,\ldots,c_j,\ldots,0]^T.$$
This implies that $2c_{ij}c_{kj}=0$ for $i,k\not=j$. In particular, $c_{ij}^2=0$ for $i\not=j$ and so $c_j=c_{jj} e_j$, which in turn, implies that $A$ must be diagonal and hence symmetric. Now, let $d_1,\ldots,d_n$ be the diagonal entries of $A$. Finally, by setting $X=I$ in the identity, we conclude that $4A^2=2A$, that is $d_j(2d_j-1)=0$ for any $j=1,\ldots,n$, completing the proof.
\qed

\begin{lem}
\label{Ly:properties}
The Stein operator satisfies the following basic properties:\\
(a) For any $A\in \mathbb{R}^{n \times n}$, $S_A$ is linear.\\
(b) If $P$ is an orthogonal matrix, then,  $S_{PAP^T}(PXP^T)=PS_A(X)P^T.$\\
(c) $S_A$ is the identity operator iff $A=0$ and $S_A$ is the zero operator iff $A=\pm I$.
\end{lem}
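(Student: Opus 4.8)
The plan is to dispatch parts (a) and (b) by direct computation and to concentrate the effort on (c). Linearity in (a) is immediate from the definition $S_A(X)=X-AXA^T$, since both $X\mapsto X$ and $X\mapsto AXA^T$ are linear in $X$. For (b), I would expand $S_{PAP^T}(PXP^T)=PXP^T-(PAP^T)(PXP^T)(PAP^T)^T$ and use $P^TP=I$ to collapse the middle factors into $P\,(X-AXA^T)\,P^T=PS_A(X)P^T$; this mirrors Lemma \ref{Ly:properties1}(d) for the Lyapunov operator.

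For (c), the two sufficiency directions are trivial verifications: $A=0$ gives $S_A(X)=X-0=X$, the identity, while $A=\pm I$ gives $AXA^T=X$ and hence $S_A(X)=0$, the zero operator. The substance lies in the converses, and here I would reuse the basis-matrix technique from the proof of Theorem \ref{Ly:ch}. Writing $A=[c_1,\ldots,c_n]$, the key identities are $AE_{jj}A^T=(Ae_j)(Ae_j)^T=c_jc_j^T$ and $AE_{ij}A^T=c_ic_j^T+c_jc_i^T$, obtained directly from $E_{jj}=e_je_j^T$ and $E_{ij}=e_ie_j^T+e_je_i^T$.

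Suppose first that $S_A$ is the identity operator, i.e.\ $AXA^T=0$ for every $X\in\S^n$. Setting $X=E_{jj}$ yields $c_jc_j^T=0$, and since a real rank-one product $c_jc_j^T$ vanishes only when $c_j=0$, every column of $A$ is zero, so $A=0$. Now suppose $S_A$ is the zero operator, i.e.\ $AXA^T=X$ for every $X\in\S^n$. Setting $X=E_{jj}$ gives $c_jc_j^T=e_je_j^T$, and the elementary fact that $uu^T=vv^T$ forces $u=\pm v$ for real vectors (the ranges must coincide, and comparing norms fixes the scalar to $\pm1$) shows $c_j=\varepsilon_je_j$ with $\varepsilon_j\in\{+1,-1\}$. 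Hence $A$ is diagonal with diagonal entries $\pm1$.

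The step I expect to be the crux is ruling out mixed signs, since the diagonal data $\{\pm1\}$ alone does not yet force $A=\pm I$. To synchronize the signs I would feed the off-diagonal basis matrices into the identity: for $i<j$, evaluating $AE_{ij}A^T=E_{ij}$ and using $c_i=\varepsilon_ie_i$, $c_j=\varepsilon_je_j$ gives $\varepsilon_i\varepsilon_j\,E_{ij}=E_{ij}$, whence $\varepsilon_i\varepsilon_j=1$ for every pair $i\neq j$. Therefore all the $\varepsilon_j$ share a common sign, yielding $A=I$ or $A=-I$, i.e.\ $A=\pm I$. The main obstacle is thus conceptual rather than computational: recognizing that the diagonal test matrices pin down the entries only up to sign and that the off-diagonal test matrices are exactly what is needed to force the signs to agree.
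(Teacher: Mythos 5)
Your proposal is correct and follows essentially the same route as the paper: testing the identity $AXA^T=X$ on the basis matrices $E_{jj}$ to pin down the columns up to sign, and then on $E_{ij}$ to force the signs to agree. The only (immaterial) difference is in the identity-operator direction, where the paper takes $X=I$ to get $AA^T=0$ while you take $X=E_{jj}$ to get $c_jc_j^T=0$; both conclude $A=0$ at once.
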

\proof We only prove the property (c), since the others are straightforward. For the first part, clearly, $S_A$ is the identity iff $AXA^T=0$ for any symmetric matrix $X$. In particular, taking $X=I$ we get that $AA^T=0$ and so $A=0$.\\
Let us prove the second part. Observe that, if for any $j=1,\ldots,n$, $c_j=(c_{1j},\ldots,c_{nj})^T$ is the $j$-th column of $A$, then 
$$S_A(E_{jj})=[0,\ldots,e_j,\ldots,0]-[c_{1j} c_j,\ldots,c_{jj} c_j,\ldots,c_{nj} c_j]^T.$$
Therefore, when $S_A=0$, this implies that $c_{jj} c_j=e_j$. Thus, $c_{jj}^2=1$ and $c_{ij}=0$, $i\not=j$. 

Finally, when $i<j$, taking into account that $A=[c_{11} e_1,\ldots,c_{nn} e_n]$, we obtain that 
$$0=S_A(E_{ij}) =[0,\ldots,e_j,\ldots,e_i,\ldots,0]-[0,\ldots,c_{ii} c_{jj} e_j,\ldots,c_{jj}c_{ii} e_i\ldots, 0],$$
which implies that $c_{ii}c_{ij}=1$, thereby showing that $A=\pm I$.
\qed

\begin{defn}
A square matrix $A\in \mnr$ is called $S$-idempotent iff  $S_{A}$ is idempotent; that is, $S_A^2=S_A$.
\end{defn}

It is clear that $A$ is $S$-idempotent when $A=0$ and also when $A=\pm I$. 

\begin{thm}
\label{St:ch}
A matrix $A\in \mnr$ is $S$-idempotent iff $A^2=\pm A$. In particular, the only nonsingular $S$-idempotent matrices are $\pm I$.
\end{thm}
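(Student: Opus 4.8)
The plan is to reduce $S$-idempotency to an identity between two quadratic maps and then to prove a general fact about such maps. First I would expand $S_A^2$. Using $S_A(X)=X-AXA^T$, a direct computation gives
$$S_A^2(X)=X-2AXA^T+A^2X(A^2)^T,$$
so that $S_A^2=S_A$ holds precisely when
$$A^2X(A^2)^T=AXA^T\qquad\text{for all } X\in\S^n.$$
The forward direction is then immediate: if $A^2=\pm A$, both sides of this identity coincide term by term, so $S_A$ is idempotent. The substance of the theorem is the converse, namely that the displayed identity forces $A^2=\pm A$.

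For the converse I would isolate the following general claim and apply it with $M=A$ and $N=A^2$: if $MXM^T=NXN^T$ for every $X\in\S^n$, then $N=\pm M$. Writing $m_i$ and $n_i$ for the $i$-th columns of $M$ and $N$, the choice $X=E_{ii}=e_ie_i^T$ gives $m_im_i^T=n_in_i^T$; since a rank-one symmetric matrix $vv^T$ determines $v$ up to sign, we obtain $n_i=\varepsilon_i m_i$ for some $\varepsilon_i\in\{+1,-1\}$ when $m_i\neq 0$, and $n_i=0$ when $m_i=0$. The choice $X=E_{ij}=e_ie_j^T+e_je_i^T$ for $i\neq j$ then yields
$$m_im_j^T+m_jm_i^T=n_in_j^T+n_jn_i^T=\varepsilon_i\varepsilon_j\,(m_im_j^T+m_jm_i^T).$$

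The key step is to verify that $m_im_j^T+m_jm_i^T\neq 0$ whenever $m_i\neq 0$ and $m_j\neq 0$. If $m_i$ and $m_j$ are linearly independent, then the operator $u\mapsto(m_j^Tu)m_i+(m_i^Tu)m_j$ vanishing identically would force $m_i=m_j=0$; and if $m_j=\lambda m_i$, the sum equals $2\lambda\,m_im_i^T\neq 0$. Hence $\varepsilon_i\varepsilon_j=1$, i.e. $\varepsilon_i=\varepsilon_j$, for every pair of nonzero columns, so a single sign $\varepsilon$ serves for all columns (trivially for the zero ones). This gives $N=\varepsilon M$, and with $M=A$, $N=A^2$ we conclude $A^2=\pm A$. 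I expect this sign-consistency argument --- ensuring the column-wise signs can be chosen uniformly --- to be the main obstacle, and the nonvanishing of $m_im_j^T+m_jm_i^T$ is exactly what overcomes it.

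Finally, the ``in particular'' assertion follows by cancelling $A$: if $A$ is nonsingular and $A^2=\pm A$, multiplying by $A^{-1}$ gives $A=\pm I$, and conversely $I^2=I$ and $(-I)^2=-(-I)$ show that $\pm I$ are indeed $S$-idempotent.
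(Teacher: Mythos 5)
Your proof is correct and follows essentially the same route as the paper's: both reduce $S_A^2=S_A$ to the identity $AXA^T=A^2X(A^T)^2$ for all symmetric $X$, test it against the basis matrices $E_{jj}$ and $E_{ij}$ to get $\hat c_j=\pm c_j$ column by column, and then force a uniform sign via the off-diagonal tests. Your packaging as a standalone lemma ($MXM^T=NXN^T$ for all $X\in\S^n$ implies $N=\pm M$) and your direct verification that $m_im_j^T+m_jm_i^T\neq 0$ for two nonzero columns is a slightly cleaner, coordinate-free rendering of the paper's contrapositive argument that a sign disagreement forces one column to vanish.
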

\proof First, for any $X\in \S^n(\RR)$ we have that
\begin{eqnarray*}
    S^2_A(X)=S_A(S_A(X))&=&S_A(X)-AS_A(X)A^T\\
    & =& X-AXA^T-A(X-AXA^T)A^T\\
    &=& X-2AXA^T+A^2X(A^T)^2
\end{eqnarray*}
%
and hence $A$ is $S$-idempotent iff 
$$AXA^T=A^2X(A^T)^2$$
for any $X\in \S^n(\RR)$, which in particular implies that $$AE_{jj}A^T=A^2E_{jj}(A^T)^2$$ for any $j=1,\ldots,n$. Therefore, if for any $j=1,\ldots,n$ we consider $c_j=(c_{1j},\ldots,c_{nj})^T$, the $j$-th column of $A$ and  $\hat c_j=(\hat c_{1j},\ldots,\hat c_{nj})^T$, the $j$-th column of $A^2$, then 
$$[c_{1j} c_j,\ldots,c_{nj} c_j]=[\hat c_{1j}\hat c_j,\ldots,\hat c_{nj}\hat c_j] ,\hspace{.25cm} j=1,\ldots,n.
$$

For a fixed $j=1,\ldots,n$, the above equality implies that $\hat c_j=0$ iff $c_j=0$ and moreover $\hat c_j=d_j c_j$, where $d_j\in \RR$ and $d_j\not=0$. Moreover, for any $k=1,\ldots,n$ we also have that 
$c_{kj} c_j=\hat c_{kj}d_j c_j$, so that when $c_j\not=0$ this implies that $c_{kj}=d_j\hat c_{kj}$. Hence $c_j=d_j\hat c_j$. In conclusion, if $c_j\not=0$ we have  $c_j=d_j\hat c_j=d_j^2 c_j$ and hence $d_j^2=1$. Since this statement is also true when $c_j=0$ because then $\hat c_j=0$, we conclude that $A^2=AD$, where $D^2=I$.

Finally, suppose that $d_id_j=-1$ when $1\le i<j\le n$. Since 
$$AE_{ij}A^T=A^2E_{ij}(A^T)^2,\hspace{.15cm}1\le i\le j\le n.$$
we also have 
$$[c_{1j} c_i+c_{1i} c_j,\ldots,c_{nj} c_i+c_{ni} c_j]=[\hat c_{1j}\hat c_i+\hat c_{1i}\hat c_j,\ldots,\hat c_{nj}\hat c_i+\hat c_{ni}\hat c_j],$$
and hence, for any $k=1,\ldots,n$ 
$$c_{kj} c_i+c_{ki} c_j=d_id_j (c_{kj} c_i+c_{ki}\
c_j)=-c_{kj} c_i-c_{ki} c_j$$
which implies that $c_{kj} c_i+c_{ki} c_j=0$, which in turn, leads us to conclude that $c_{kj}c_{ki}=0$. If $c_i\not=0$, then $c_{ki}\not=0$ for some $k=1,\ldots,n$ and hence the above identities imply that $c_{kj}=0$ and hence $ c_j=0$. So, when $d_id_j=-1$ then either $ c_i$ or $c_j$ must be the zero vector. 

Let $i=\min\{k=1,\ldots,n: c_k\not=0\}$. Then, 
$$AD=[d_1 c_1,\ldots,d_i c_i,\ldots,d_nc_n]=d_i[c_1,\ldots, c_i,\ldots,c_n]=d_iA,$$ 
since when $1\le j<i$, $0=c_j=d_j c_j=d_i c_j$, whereas when $j>i$ and $d_j\not=d_i$, then $ c_j=0$. Hence $ 0= c_j=d_j c_j=d_i c_j$. 
\qed

In view of Lemma \ref{idemrangemonotone}, Theorem \ref{Ly:ch} and Theorem \ref{St:ch}, we obtain the following result. We skip the proof.

\begin{thm}\label{idem_result}
Let $A \in \mnr.$ We then have the following:\\
(a) If $A$ is a diagonal matrix, whose diagonal entries are either $0$ or $\frac{1}{2}$, then the Lyapunov operator $L_A$ is range monotone.\\
(c) If $A$ satisfies $A^2=\pm A$, then the Stein operator $S_A$ is range monotone.
\end{thm}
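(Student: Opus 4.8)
The plan is to recognize that this theorem is a direct corollary obtained by feeding the characterizations of idempotence (Theorem \ref{Ly:ch} and Theorem \ref{St:ch}) into the general principle that idempotent operators are range monotone (Lemma \ref{idemrangemonotone}). No new computation is required; the entire substance has already been established, and the role of this statement is merely to package those results into a convenient hypothesis on $A$ itself rather than on the operator.

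For part (a), I would argue as follows. Suppose $A$ is diagonal with each diagonal entry lying in $\{0,\tfrac12\}$; equivalently $A$ is diagonal and $\sigma(A)\subseteq\{0,\tfrac12\}$. By the forward direction of Theorem \ref{Ly:ch}, this is precisely the condition for $A$ to be $L$-idempotent, i.e. $L_A^2=L_A$, so the Lyapunov operator $L_A$ is an idempotent linear operator on $\S^n$. Applying Lemma \ref{idemrangemonotone} with $V=\S^n$ and $T=L_A$ then yields that $L_A$ is range monotone, which is the desired conclusion.

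For part (c), the argument is parallel. Assume $A^2=\pm A$. By the forward direction of Theorem \ref{St:ch}, this is exactly the condition for $A$ to be $S$-idempotent, so $S_A^2=S_A$ and hence $S_A$ is an idempotent operator on $\S^n$. Invoking Lemma \ref{idemrangemonotone} again, now with $T=S_A$, gives that $S_A$ is range monotone.

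There is essentially no obstacle here, since the difficult work lies in the two characterization theorems already in hand; the only point demanding any care is to confirm that the hypotheses stated on $A$ match verbatim the idempotence conditions, so that Lemma \ref{idemrangemonotone} applies without any additional verification. I would keep the write-up to two short sentences, one per item, exactly as the phrase ``In view of Lemma \ref{idemrangemonotone}, Theorem \ref{Ly:ch} and Theorem \ref{St:ch}'' already signals.
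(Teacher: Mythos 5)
Your proposal is correct and coincides with the paper's intended argument: the authors explicitly state that the theorem follows from Lemma \ref{idemrangemonotone} together with Theorems \ref{Ly:ch} and \ref{St:ch} and skip the proof, which is exactly the chain of reasoning you supply. Nothing further is needed.
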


Let us briefly discuss the notion of the group inverse of a linear operator on a finite dimensional vector space $V$ \cite{rob}. Let $T:V \rightarrow V$ be linear. $T$ is said to have a group inverse $L:V \rightarrow V$ if $TLT=T, LTL=L$ and $TL=LT$. 
The group inverse of $T$ need not exist, but when it does exist, it is unique and will be denoted by $T^{\#}.$ We make use of the following in our discussion.

\begin{thm}\cite[Theorem 5]{rob}\label{rob}
Let $T:V \rightarrow V$ be linear. Then the following are equivalent: \\
(a) $T^{\#}$ exists.\\
(b) The subspaces $R(T)$ and $N(T)$, of $V$,  are complementary.\\
(c) $R(T^2)=R(T).$\\
(d) $N(T^2)=N(T).$
\end{thm}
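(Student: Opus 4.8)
The plan is to prove the four statements equivalent by first disposing of the two purely dimension-theoretic equivalences (c)$\Leftrightarrow$(d) and (b)$\Leftrightarrow$(c), which rest only on the rank-nullity theorem, and then closing the loop with the two implications (a)$\Rightarrow$(b) and (b)$\Rightarrow$(a) that genuinely involve the group inverse. Throughout I would use the two unconditional inclusions $R(T^2)\subseteq R(T)$ and $N(T)\subseteq N(T^2)$, together with the identity $\dim R(T)+\dim N(T)=\dim V$.

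For (c)$\Leftrightarrow$(d), applying rank-nullity to both $T$ and $T^2$ gives $\dim N(T^2)-\dim N(T)=\dim R(T)-\dim R(T^2)$. The right-hand side is nonnegative (by $R(T^2)\subseteq R(T)$) and vanishes exactly when $R(T^2)=R(T)$, while the left-hand side is nonnegative and vanishes exactly when $N(T^2)=N(T)$; hence the two equalities hold simultaneously. For (b)$\Leftrightarrow$(c), writing $R(T^2)=T(R(T))$ and applying rank-nullity to the restriction $T|_{R(T)}$ yields $\dim R(T^2)=\dim R(T)-\dim\big(R(T)\cap N(T)\big)$, so (c) is equivalent to $R(T)\cap N(T)=\{0\}$. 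Independently, the formula $\dim(R(T)+N(T))=\dim V-\dim(R(T)\cap N(T))$ shows that trivial intersection already forces $R(T)+N(T)=V$; thus $R(T)\cap N(T)=\{0\}$ is the same as complementarity, and (b)$\Leftrightarrow$(c) follows.

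For (a)$\Rightarrow$(b), assuming $T^{\#}$ exists I would set $P=TT^{\#}=T^{\#}T$. The defining relations make $P$ idempotent, so $V=R(P)\oplus N(P)$, and it remains only to identify $R(P)=R(T)$ and $N(P)=N(T)$: the inclusion $R(P)\subseteq R(T)$ is clear, and for $x=Ty\in R(T)$ one has $Px=TT^{\#}Ty=Ty=x$; similarly $Px=0$ forces $Tx=TT^{\#}Tx=0$ via $P=T^{\#}T$ and $TT^{\#}T=T$, with the reverse inclusion immediate. This exhibits $R(T)$ and $N(T)$ as complementary.

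For (b)$\Rightarrow$(a), which is where the real work lies, I would first observe that $T|_{R(T)}:R(T)\to R(T)$ is a bijection: it is injective because $R(T)\cap N(T)=\{0\}$, and surjective because its image is $R(T^2)=R(T)$ by the already-established (b)$\Leftrightarrow$(c). Letting $S=(T|_{R(T)})^{-1}$, I would define $T^{\#}$ to equal $S$ on $R(T)$ and $0$ on $N(T)$, extended by linearity through the direct sum. The concluding step is to verify $TT^{\#}T=T$, $T^{\#}TT^{\#}=T^{\#}$ and $TT^{\#}=T^{\#}T$ by decomposing an arbitrary $x=u+v$ with $u\in R(T)$, $v\in N(T)$, and checking that both $TT^{\#}$ and $T^{\#}T$ act as the projection onto $R(T)$ along $N(T)$. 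I expect this verification to be the main obstacle — one must track which factor lands in $R(T)$ so that $S$ may legitimately be applied, and must confirm the commutation $TT^{\#}=T^{\#}T$ and not merely the two outer identities — although it is entirely routine once the direct-sum decomposition is fixed.
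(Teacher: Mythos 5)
The paper does not actually prove this theorem; it is quoted verbatim from Robert's 1968 article, so there is no in-text argument to measure yours against. Your blind proof is correct and complete. The two dimension-counting equivalences are sound: rank--nullity applied to $T$ and $T^2$ gives $\dim N(T^2)-\dim N(T)=\dim R(T)-\dim R(T^2)$ with both sides nonnegative, and the identity $\dim R(T^2)=\dim R(T)-\dim\bigl(R(T)\cap N(T)\bigr)$ correctly identifies the kernel of $T|_{R(T)}$ as $R(T)\cap N(T)$, after which trivial intersection forces complementarity by a dimension count. For (a)$\Rightarrow$(b), the operator $P=TT^{\#}=T^{\#}T$ is indeed idempotent with $R(P)=R(T)$ and $N(P)=N(T)$, and for (b)$\Rightarrow$(a) the map defined as $(T|_{R(T)})^{-1}$ on $R(T)$ and $0$ on $N(T)$ does satisfy all three defining identities, since both $TT^{\#}$ and $T^{\#}T$ act as the projection onto $R(T)$ along $N(T)$. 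This is the standard argument (essentially the one in Robert's paper), it uses finite-dimensionality exactly where it must, and the implication chain (a)$\Rightarrow$(b)$\Rightarrow$(a) together with (b)$\Leftrightarrow$(c)$\Leftrightarrow$(d) covers all four statements; I see no gaps.
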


Let us recall that an operator $T$, on an inner product space, is called normal if $TT^{\ast}=T^{\ast}T$, where $T^{\ast}$ denotes the adjoint of $T$. It is easy to see that if $T$ is normal, then $R(T^{\ast})=R(T)$. So, if $T$ is normal, then the subspaces $R(T)$ and $N(T)$ are complementary. Thus, we have the following consequence of Theorem \ref{rob}.

\begin{cor}
Let $T:V \rightarrow V$ be linear, where $V$ is a finite dimensional inner product space. If $T$ is a normal operator, then $T^{\#}$ exists. 
\end{cor}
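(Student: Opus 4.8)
The plan is to read the conclusion straight off Theorem~\ref{rob}. That theorem lists ``$T^\#$ exists'' as equivalent to ``$R(T)$ and $N(T)$ are complementary subspaces of $V$'' (its items (a) and (b)), so it suffices to verify the latter for a normal $T$. The whole argument will reduce to showing that normality forces $R(T)=N(T)^\perp$; once that is in hand, the orthogonal decomposition $V=N(T)\oplus N(T)^\perp$ supplies the complementarity automatically. The only place where normality is genuinely used is the preliminary observation $N(T)=N(T^\ast)$, which I would establish by the standard norm computation: for every $x\in V$,
\[
\|Tx\|^2=\langle Tx,Tx\rangle=\langle T^\ast T x,x\rangle=\langle TT^\ast x,x\rangle=\langle T^\ast x,T^\ast x\rangle=\|T^\ast x\|^2,
\]
so $Tx=0$ if and only if $T^\ast x=0$, i.e. $N(T)=N(T^\ast)$.

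Next I would invoke the always-valid finite-dimensional identity $R(T)^\perp=N(T^\ast)$, equivalently $R(T)=N(T^\ast)^\perp$. Substituting $N(T^\ast)=N(T)$ from the previous step gives $R(T)=N(T)^\perp$. Since $V$ is finite dimensional, every subspace $W$ satisfies $V=W\oplus W^\perp$; taking $W=N(T)$ yields $V=N(T)\oplus N(T)^\perp=N(T)\oplus R(T)$. Hence $R(T)$ and $N(T)$ are complementary, and the equivalence of (a) and (b) in Theorem~\ref{rob} then guarantees that $T^\#$ exists.

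I do not anticipate any real obstacle here: each step is routine finite-dimensional inner-product-space linear algebra. The single point that must not be glossed over is the identity $N(T)=N(T^\ast)$, because it is precisely this consequence of normality that upgrades the trivially-true complementarity of $R(T^\ast)$ and $N(T)$ (valid for \emph{any} linear $T$) into the complementarity of $R(T)$ and $N(T)$ that Theorem~\ref{rob} actually requires. Without normality one has no control over $R(T)$ relative to $N(T)$, so this is the load-bearing step of the argument.
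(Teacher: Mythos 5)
Your proposal is correct and follows essentially the same route as the paper: the paper also reduces the claim to the complementarity of $R(T)$ and $N(T)$ via Theorem~3.10, using the fact that normality gives $R(T^{\ast})=R(T)$ (which is the orthogonal-complement dual of your identity $N(T)=N(T^{\ast})$). You have merely filled in the detail the paper labels ``easy to see'' with the standard computation $\|Tx\|=\|T^{\ast}x\|$.
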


\begin{defn}
An operator $T:{\cal S}^n \longrightarrow {\cal S}^n$, is called generalized $k$-potent ($k \geq 2$) if there exists a constant $\alpha$ such that $T^k=\alpha T.$ 
\end{defn}

\begin{pro}\label{idemtripo}
Let $T:{\cal S}^n \rightarrow {\cal S}^n$ be a generalized $k$-potent. Then $T^{\#}$ exists. In particular, any idempotent operator is group invertible.
\end{pro}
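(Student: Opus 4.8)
The plan is to reduce everything to the group-invertibility criterion recorded in Theorem \ref{rob}, namely that $T^{\#}$ exists if and only if $R(T^2)=R(T)$ (equivalently, $N(T^2)=N(T)$). Since the inclusion $R(T^2)\subseteq R(T)$ holds for every linear operator, the entire burden is to establish the reverse inclusion $R(T)\subseteq R(T^2)$ out of the defining relation $T^k=\alpha T$.

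First I would treat the generic case $\alpha\neq 0$. Given $Y\in R(T)$, write $Y=T(X)$ for some $X\in \mathcal{S}^n$. Using $k\ge 2$ together with $\alpha\neq 0$, I rewrite
\[
Y=T(X)=\tfrac{1}{\alpha}\,T^k(X)=T^2\!\left(\tfrac{1}{\alpha}\,T^{k-2}(X)\right),
\]
which exhibits $Y$ as an element of $R(T^2)$ (note $k-2\ge 0$, so $T^{k-2}$ is legitimate, with the convention $T^0=I$). Hence $R(T)=R(T^2)$, and Theorem \ref{rob} delivers the existence of $T^{\#}$. The same relation disposes of the ``in particular'' clause at once: an idempotent operator satisfies $T^2=T=1\cdot T$, so it is generalized $2$-potent with $\alpha=1$, and is therefore group invertible. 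Should one prefer to argue with kernels, the identical observation gives $N(T^2)\subseteq N(T)$: if $T^2(X)=0$, then $T(X)=\tfrac{1}{\alpha}T^{k-2}\big(T^2(X)\big)=0$, and Theorem \ref{rob} applies again.

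The step I expect to demand care is the degenerate case $\alpha=0$, in which the relation reads $T^k=0$, i.e. $T$ is nilpotent. As already noted for matrices in the introduction, a \emph{nonzero} nilpotent operator fails to be group invertible: if $R(T)=R(T^2)$ held, then iterating would force $R(T)=R(T^j)$ for all $j\ge 1$, and nilpotency would give $R(T)=\{0\}$, i.e. $T=0$. Consequently the proposition is to be read with $\alpha\neq 0$ tacitly assumed, which is exactly the situation in every intended application (where $\alpha=1$); under this reading the computation above applies verbatim and the conclusion follows.
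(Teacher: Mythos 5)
Your proposal is correct and takes essentially the same route as the paper: the printed proof verifies $N(T^2)=N(T)$ by noting that $T^2(x)=0$ forces $0=T^k(x)=\alpha T(x)$, hence $T(x)=0$, and then invokes Theorem \ref{rob}, which is exactly your kernel variant (your range computation $T=\frac{1}{\alpha}T^{k-2}\circ T^2$ is the mirror image of the same identity). The one place you go beyond the paper is in flagging the degenerate case $\alpha=0$: the paper's step ``$\alpha T(x)=0$, proving that $T(x)=0$'' silently assumes $\alpha\neq 0$, and your observation that a nonzero nilpotent operator is a genuine counterexample when $\alpha=0$ is a worthwhile precision, consistent with Remark \ref{remlyapunov}.
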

\proof
We have $T^k=\alpha T$, for some $k \geq 2.$ Let $T^2(x)=0.$ Then $0=T^k(x)=0=\alpha T(x)=0,$ proving that $T(x)=0.$ Thus $N(T^2)=N(T).$ So, the group inverse of $T$ exists. 
\qed

In the next result, we identify four classes of matrices for which the Lyapunov as well as the Stein operators are group invertible. We shall make use of the following observation. Let $A \in \mathbb{R}^{n \times n}$. Then, $L_A^{\ast}=L_{A^T}$ and $S_A^{\ast}=S_{A^T}.$

\begin{thm}\label{gpinvexist}\leavevmode\\
The group inverses $L_A^{\#}$ and $S_A^{\#}$ exist, if $A$ satisfies any of the following conditions:\\
(a) $A^2=-I$.\\
(b) $A^2=I.$\\
(c) $A^T=-A$.\\
(d) $A^T=A.$
\end{thm}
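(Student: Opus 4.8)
The plan is to verify, for each of the four conditions, that the operators $L_A$ and $S_A$ are \emph{normal} with respect to the trace inner product on $\S^n$, and then invoke the Corollary following Theorem \ref{rob}, which guarantees that a normal operator on a finite dimensional inner product space is group invertible. The key tool is the adjoint identities recorded just before the statement, namely $L_A^\ast = L_{A^T}$ and $S_A^\ast = S_{A^T}$. Thus normality of $L_A$ amounts to the operator identity $L_A L_{A^T} = L_{A^T} L_A$, and normality of $S_A$ to $S_A S_{A^T} = S_{A^T} S_A$.

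First I would handle conditions (c) and (d) quickly. If $A^T = A$ or $A^T = -A$, then $A^T = \pm A$, so $L_{A^T} = L_{\pm A} = \pm L_A$ and likewise $S_{A^T} = S_{\pm A}$. In the symmetric/skew-symmetric case the adjoint is $\pm$ the operator itself, so $L_A$ and $S_A$ commute trivially with their adjoints; hence both operators are normal and Theorem \ref{rob} (via the Corollary) gives group invertibility immediately. One small care is needed for $S_A$, since $S_{-A}(X) = X - (-A)X(-A)^T = X - AXA^T = S_A(X)$, so in fact $S_{A^T} = S_A$ whenever $A^T = \pm A$; this only makes normality easier.

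For conditions (a) and (b), where $A^2 = \pm I$, the matrix need not be symmetric, so I would compute the two composite operators directly from the definitions and compare. For the Lyapunov case, expanding $L_A L_{A^T}(X) = A(A^T X + X A)+ (A^T X + X A)A^T$ and $L_{A^T}L_A(X) = A^T(AX + XA^T) + (AX+XA^T)A$, the difference reduces to an expression in $A^2$, $(A^T)^2 = (A^2)^T$, and cross terms; substituting $A^2 = \pm I$ (so $(A^2)^T = \pm I$ as well) should collapse the mismatched terms and yield commutation. The Stein case is analogous: I would expand $S_A S_{A^T}(X)$ and $S_{A^T} S_A(X)$, and the only obstruction to equality lives in the terms carrying $A^2$ and $(A^T)^2$, which become scalar multiples of the identity under the hypothesis. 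The main obstacle I anticipate is purely bookkeeping: the cross terms such as $A X A$ and $A^T X A^T$ do not obviously vanish, so I must check that they appear symmetrically in both composite expressions and therefore cancel in the difference, rather than relying on $A$ commuting with $A^T$ (which is false in general under $A^2 = \pm I$).

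An alternative, arguably cleaner route for (a) and (b) is to use Theorem \ref{rob}(d) directly, showing $N(T^2) = N(T)$ for $T = L_A$ or $T = S_A$. Since $A^2 = \pm I$ forces $A$ to be nonsingular, one can argue that $L_A$ and $S_A$ are themselves invertible under these hypotheses — for instance $A^2 = -I$ makes $A$ orthogonal-like with purely imaginary spectrum, and $A^2 = I$ gives a spectrum in $\{\pm 1\}$ — and an invertible operator is vacuously group invertible with $T^\# = T^{-1}$. However, invertibility of $S_A$ can fail (e.g. when $A$ has eigenvalues that are reciprocal conjugate pairs on the unit circle), so I would not rely on invertibility uniformly; the normality argument is safer and uniform across all four cases. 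I would therefore present the normality computation as the primary proof, noting that (c) and (d) are the easy subcases and (a), (b) require the explicit but routine expansion above.
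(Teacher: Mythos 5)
Your treatment of cases (c) and (d) is correct and is essentially the paper's argument: there $L_{A^T}=\pm L_A$ and $S_{A^T}=S_A$, so the operators are self- or skew-adjoint, hence normal, hence group invertible. The gap is in cases (a) and (b), where your primary strategy --- proving normality of $L_A$ and $S_A$ from $A^2=\pm I$ --- fails. Carrying out the expansion you defer as ``bookkeeping'' gives
$$L_AL_{A^T}(X)-L_{A^T}L_A(X)=(AA^T-A^TA)X+X(AA^T-A^TA)=L_{AA^T-A^TA}(X),$$
and similarly $S_AS_{A^T}(X)-S_{A^T}S_A(X)=(AA^T)X(AA^T)-(A^TA)X(A^TA)$. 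The obstruction is the commutator $AA^T-A^TA$, not anything involving $A^2$, so the hypothesis $A^2=\pm I$ buys you nothing: $L_A$ (resp.\ $S_A$) is normal precisely when the matrix $A$ is normal, and $A^2=\pm I$ does not imply that. Concretely, $A=\begin{pmatrix}0 & 2\\ -\tfrac{1}{2} & 0\end{pmatrix}$ satisfies $A^2=-I$ but $AA^T=\mathrm{diag}(4,\tfrac14)\neq\mathrm{diag}(\tfrac14,4)=A^TA$, and with $X=E_{11}$ one checks directly that neither $L_A$ nor $S_A$ commutes with its adjoint. Your fallback (invertibility of the operators) also fails, as you partly suspected: for $A^2=-I$ the spectrum of $A$ is $\{\pm i\}$, so $L_A$ has the eigenvalue $i+(-i)=0$ and $S_A$ has the eigenvalue $1-i\cdot(-i)=0$; both operators are genuinely singular for $n\ge 2$.

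What actually works --- and is what the paper does --- is to verify the criterion $N(T^2)=N(T)$ of Theorem \ref{rob} via a polynomial identity rather than via normality. A direct computation using $A^2=\pm I$ gives $L_A^2(X)=\mp 2\,(X\mp AXA^T)$ and hence $L_A^3=\mp 4L_A$, while $S_A^2(X)=X-2AXA^T+A^2X(A^T)^2=2S_A(X)$ in both cases. Thus $L_A$ and $S_A$ are generalized $k$-potent in the sense of Proposition \ref{idemtripo}, which immediately yields $N(T^2)=N(T)$ and therefore the existence of the group inverse. You should replace the normality computation in (a) and (b) by this $k$-potency argument; the rest of your proof can stand.
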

\proof
(a) Let $A^2=-I$. First, we consider the Lyapunov operator $L_A$. We have $$L_A(X)=AX+XA^T, X \in {\cal S}^n.$$ Thus,   
\begin{eqnarray*}
L_{A}^2(X) & = & A^2X + AXA^T + AXA^T+X(A^T)^2 \\
 & = & -2(X-AXA^T). 
 \end{eqnarray*}
So,  
\begin{eqnarray*}
L_A^3(X) & = & -2(L_A(X) - L_A(AXA^T)\\
& = & -2(AX + XA^T - A^2XA^T - AX(A^T)^2) \\
& = & -4(AX+XA^T) \\
& = & -4L_A(X).
\end{eqnarray*} 
Thus, $L_A^3=-4L_A$ and so by Proposition \ref{idemtripo}, $L_A^\#$ exists. \\
Next, we take the case of the Stein operator. We have $$S_A(X)=X-AXA^T, X \in {\cal S}^n.$$ So, 
\begin{eqnarray*}
S_A^2(X) & = & S_A(X-AXA^T) \\
& = & X-AXA^T-AXA^T+A^2X(A^T)^2\\
&=& 2(X-AXA^T)\\
& =& 2S_A(X).
\end{eqnarray*}
So, $S_A^2=S_A$ and again, by Proposition \ref{idemtripo} it follows that $S_A^{\#}$ exists. In fact, in this case, $S_A^{\#}=2S_A$. This proves (a).\\
(b) Let $A^2=I.$ Then, a calculation done as earlier, leads to the formula $$L_{A}^2(X)=2(X+AXA^T).$$ This, in turn, implies that $$L_{A}^3=4L_A.$$ The conclusion on the existence of the group inverse of $L_A$, now follows. \\
For the Stein operator, just as in the case $A^2=-I,$ it follows that $$S_A^2=2S_A$$ and so, $S_A^{\#}$ exists.\\
(c) Since $A^T=-A,$ we have $L_A(X)=AX-XA.$ Now, for $X,Y \in {\cal S}^n,$
\begin{eqnarray*}
 \langle X, L_A(Y)\rangle & = & \rm tr (X(AY-YA))\\
 & = & \rm tr(XAY) - \rm tr (XYA)\\
 & = & -(\rm tr (XYA) - \rm tr (XAY))\\
 & = & -(\rm tr (AXY) - \rm tr (XAY))\\
 & = & -(\rm tr((AX-XA)Y))\\
 & = & -\langle L_A(X), Y\rangle.
\end{eqnarray*}
Hence $L_A^*=-L_{A},$ which means that $R(L_A)=R(L_A^{\ast})$. Thus $R(L_A)$ and $N(L_A)$ are complementary subspaces. Thus, $L_A^\#$ exists. 

Next, let us consider the Stein operator. Since $A^T=-A,$ we have $S_A(X)=X+AXA.$ Now, 
\begin{eqnarray*}
\langle X, S_A(Y)\rangle & = & \rm tr(X(Y+AYA))\\
& = & \rm tr(XY+XAYA) \\
&=& \rm tr(XY) + \rm tr(AXAY) \\
&=& \rm tr((X+AXA)Y)\\
&=& \langle S_A(X), Y\rangle.
\end{eqnarray*}
Hence $S_A^*=S_{A}.$ Once again, the subspaces $R(S_A)$ and $N(S_A)$ are complementary and so $S_A^\#$ exists. \\
(d) When $A=A^T,$ as was remarked earlier, we have $L_A^*=L_{A^T}=L_A$ and $S_A^*=S_{A^T}=S_A.$ Thus, $L_A^\#$ and $S_A^\#$ exist.
\qed

Let $A \in \mnr$. We say that $A$ is {\it nonnegative stable}, if all the eigenvalues of $A$ have a nonnegative real part. $A$ will be referred to as {\it Shur semi-stable}, if all its eigenvalues lie in the closed unit disc of the complex plane. 

\begin{rem}\label{remlyapunov}
Some remarks are in order. Let matrix $A$ be nilpotent (so that $A$ is nonnegative stable). Then it is easy to show that the operator $L_A$ is also nilpotent. As noted earlier, it follows that the group inverse of $L_A$ does not exist. So, $L_A$ is not trivially range monotone. Nevertheless, we identify a class of matrices $A$, which are nonnegative stable, for which the associated Lyapunov operator is trivially range monotone. Example \ref{remstein} shows that not all Schur semi-stable matrices are trivially range monotone. However, we identify a class of Schur semi-stable matrices $A$, for which the Stein operator is trivially range monotone. Both these are presented in the next result. 
\end{rem}

In what follows, we consider the question of trivial range monotonicity of the Lyapunov and the Stein operators corresponding to matrices that satisfy one of the first three conditions of Theorem \ref{gpinvexist}.

\begin{thm}\label{trangemonotone}
    For the Lyapunov operator $L_A$ and Stein operator $S_A$ the following are true:\\
    $(a)$ Let $A^2=-I.$ Then $L_A$ and $S_A$ are trivially range monotone.\\
    $(b)$ Let $A^2=I.$ Then $S_A$ is trivially range monotone.\\
    $(c)$ Let $A^T=-A.$ then $L_A$ is trivially range monotone. 
    \end{thm}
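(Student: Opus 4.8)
The plan is to follow exactly the template of the proof of Theorem~\ref{trivrangemon}: in each of the three cases I will establish the \emph{almost monotonicity} of the relevant operator $T$ (i.e. $T(X)\succeq 0\Rightarrow T(X)=0$, equivalently $R(T)\cap\mathcal{S}^n_+=\{0\}$), and then combine it with the group invertibility already supplied by Theorem~\ref{gpinvexist}. Once almost monotonicity is in hand, the passage to trivial range monotonicity is uniform: if $T(X)\succeq 0$ and $X\in R(T)$, then almost monotonicity forces $T(X)=0$, so $X\in N(T)\cap R(T)$; since $T^{\#}$ exists (Theorem~\ref{gpinvexist}), Theorem~\ref{rob} gives that $R(T)$ and $N(T)$ are complementary, whence $X=0$. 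Thus the whole content of the theorem reduces to proving almost monotonicity of each operator, and the group inverse is invoked only at the very end, case by case (part $(a)$ via Theorem~\ref{gpinvexist}$(a)$, part $(b)$ via $(b)$, part $(c)$ via $(c)$).

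The single device that handles case $(a)$ (both operators) and the Stein operator in case $(b)$ is the conjugation identity $AYA^T=-Y$, valid whenever $Y=T(X)$. For the Lyapunov operator with $A^2=-I$, writing $Y=AX+XA^T$ and using $A^2=(A^T)^2=-I$ gives $AYA^T=A^2XA^T+AX(A^T)^2=-XA^T-AX=-Y$. For the Stein operator with $A^2=\pm I$, writing $Y=X-AXA^T$ gives $AYA^T=AXA^T-A^2X(A^T)^2=AXA^T-X=-Y$, since $A^2X(A^T)^2=X$ in both cases $A^2=I$ and $A^2=-I$. Granting this identity, I would finish by a trace sandwiching argument: since $Y\succeq 0$ we have $\mathrm{tr}(Y)\ge 0$; on the other hand $\mathrm{tr}(AYA^T)=\mathrm{tr}(A^TAY)\ge 0$, because the trace of the product of the two positive semidefinite matrices $A^TA$ and $Y$ is nonnegative, so the identity yields $\mathrm{tr}(Y)=-\mathrm{tr}(AYA^T)\le 0$. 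Hence $\mathrm{tr}(Y)=0$, and a positive semidefinite matrix of zero trace vanishes, so $Y=T(X)=0$, which is precisely almost monotonicity.

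Case $(c)$ (Lyapunov, $A^T=-A$) is even quicker and does not need the conjugation identity. Here $L_A(X)=AX-XA$ is a commutator, so $\mathrm{tr}\bigl(L_A(X)\bigr)=\mathrm{tr}(AX)-\mathrm{tr}(XA)=0$ automatically; combined with $L_A(X)\succeq 0$ this again forces $L_A(X)=0$. The group invertibility from Theorem~\ref{gpinvexist}$(c)$ (where $L_A^{*}=-L_A$, so $R(L_A)$ and $N(L_A)$ are complementary) then closes the argument as above.

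I expect the main obstacle to be conceptual rather than computational: spotting that the same conjugation identity $AYA^T=-Y$ governs three of the four sub-claims, and recognizing that it is exactly the sign $-1$ that drives the trace contradiction. This also explains transparently why the Lyapunov operator is omitted from case $(b)$: for $A^2=I$ one computes $AYA^T=A^2XA^T+AX(A^T)^2=XA^T+AX=+Y$, so the trace argument collapses and, as the example $A=I$ shows, $L_A$ fails to be trivially range monotone. The remaining points to be careful about are the two elementary trace facts used above, namely that $\mathrm{tr}(PQ)\ge 0$ for $P,Q\succeq 0$ and that $Y\succeq 0$ with $\mathrm{tr}(Y)=0$ implies $Y=0$, both of which follow from diagonalizing $Y$.
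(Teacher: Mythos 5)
Your proposal is correct, and its overall architecture (prove almost monotonicity of $T$, then use group invertibility from Theorem~\ref{gpinvexist} so that $X\in R(T)\cap N(T)$ forces $X=0$) is exactly the paper's. The Stein cases and part $(c)$ coincide with the paper's argument in substance: the paper also reduces both Stein cases to the identity $AZA^T=-Z$ for a positive semidefinite $Z=S_A(Y)$ (reached via $S_A^2=2S_A$), and proves $(c)$ from the vanishing trace of the commutator $AX-XA$. The one place you genuinely diverge is part $(a)$ for the Lyapunov operator. The paper argues via the cubic identity $L_A^3=-4L_A$ and asserts $0\preceq L_A^3(X)$, a step that is not justified as written: with $W=L_A(X)\succeq 0$ one only has $L_A^2(W)=-2W+2AWA^T$, which is not obviously positive semidefinite; making the paper's argument airtight requires extracting $AWA^T=-W$ from $L_A^2(W)=-4W$, which is precisely the conjugation identity you compute directly from $A^2=-I$. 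So your route is cleaner at that point and has the merit of putting all three conjugation-based sub-claims under the single identity $A\,T(X)\,A^T=-T(X)$. Your trace-sandwich conclusion ($\mathrm{tr}(Y)\ge 0$, $\mathrm{tr}(AYA^T)\ge 0$, sum zero) is a correct, slightly more roundabout substitute for the paper's one-line observation that congruence preserves positive semidefiniteness, so $Y\succeq 0$ and $-Y=AYA^T\succeq 0$ give $Y=0$ immediately. Your closing remark on why $L_A$ is excluded when $A^2=I$ (the sign flips to $AYA^T=+Y$, and $A=I$ gives the non-example $L_I(X)=2X$) is consistent with the paper's Example~\ref{invlyo}.
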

\proof
First, we observe that both the Lyapunov and the Stein operators are group invertible, under any of the three conditions on the matrix $A$, as above (in view of Theorem \ref{gpinvexist}). Thus in all these cases, the range space and the null space (of either the Lyapunov operator or the Stein operator) are complementary subspaces of ${\cal S}^n.$ We shall make repeated use of this fact, in our proofs.

    $(a)$ Let $L_A(X)\succeq 0$ with $X\in R(L_A).$ Then $0 \preceq L_A^3(X) =-4L_A(X).$ This means that $L_A(X)=0.$ Thus, $X\in R(L_A) \cap N(L_A)=\{0\}$, proving that $L_A$ is trivially range monotone. 
    
Next, we show the trivial range monotonicity of the Stein operator. Let $X\in R(S_A)$ so that there exists $Y \in \mathcal{S}^n$ with $S_{A}(Y)=X$. Also, let $S_A(X)\succeq 0.$ Then, from the calculation as earlier, we have  
\begin{eqnarray*}
0 \preceq S_A(X)=S_A^2(Y)=2S_A(Y)=2(Y-AYA^T).
\end{eqnarray*}
Set $Z:=Y-AYA^T.$ Then, $Z \succeq 0$. This means that $$0 \preceq AZA^T= AYA^T-A^2Y(A^T)^2=AYA^T-Y=-Z.$$
Thus, $Z=0$, i.e. $S_A(X)=0,$ which in turn implies that that $X=0,$ as $X\in R(S_A) \cap N(S_A).$

$(b)$ The proof of the trivial range monotonicity of the Stein operator, is entirely similar to the second part of the above result and is skipped. 

$(c)$ Let $Z:=L_A(X)=AX-XA \succeq 0,$ with $X\in R(L_A).$ Then all the eigenvalues of $Z$ are non-negative. Further, $\rm tr (Z)=\rm tr(AX-XA)=0$ and so, all the eigenvalues of $Z$ are zero. Since $Z$ is diagonalizable, $0=Z=L_A(X).$ Therefore $X=0,$ as $X\in R(L_A) \cap N(L_A).$ This shows the trivial range monotonicity property of $L_A$. \\
\qed

Let matrix $A$ be such that either $A^2=I$ or $A^2=-I.$ Then $A^{-1}$ exists and shares the same property as that of $A$. If $A$ is skew-symmetric, then $A^{\#}$ exists (since $R(A)$ and $N(A)$ are complementary) and $A^{\#}$ is also skew-symmetric. Thus we have the following immediate consequence of Theorem \ref{trangemonotone}.

\begin{cor}
Let $A^2=-I.$ Then $L_{A^{-1}}$ and $S_{A^{-1}}$ are trivially range monotone. If $A^2=I,$ then $S_{A^{-1}}$ is trivially range monotone. Let $A^T=-A.$ Then $L_{A^{\#}}$ is trivially range monotone. 
\end{cor}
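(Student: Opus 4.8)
The plan is to obtain each assertion as a direct specialization of Theorem \ref{trangemonotone}, applied not to $A$ itself but to the relevant inverse or group inverse, after checking that this new matrix inherits the hypothesis required. The only substantive point is that passage to $A^{-1}$ or $A^{\#}$ preserves the defining property, so that the conclusion of the appropriate part of Theorem \ref{trangemonotone} transfers verbatim.

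First I would dispose of the two invertible cases. If $A^2=-I$, then $A(-A)=-A^2=I$, so $A$ is invertible with $A^{-1}=-A$; hence $(A^{-1})^2=(-A)^2=A^2=-I$, and $A^{-1}$ again satisfies the hypothesis of part $(a)$. Applying part $(a)$ to $A^{-1}$ yields that $L_{A^{-1}}$ and $S_{A^{-1}}$ are trivially range monotone. Likewise, if $A^2=I$ then $A^{-1}=A$, so $(A^{-1})^2=I$, and part $(b)$ applied to $A^{-1}$ gives that $S_{A^{-1}}$ is trivially range monotone.

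The remaining case is the only one needing real work: when $A^T=-A$, I must show that $A^{\#}$ exists and is itself skew-symmetric, after which part $(c)$ applied to $A^{\#}$ finishes the argument. Existence is immediate, since a skew-symmetric matrix is normal and so $R(A)$ and $N(A)$ are complementary, a fact already invoked in the excerpt. For the skew-symmetry, I would set $B:=-(A^{\#})^T$ and verify that $B$ satisfies the three defining equations of a group inverse of $A$; by uniqueness this forces $A^{\#}=B$, that is, $(A^{\#})^T=-A^{\#}$. Concretely, transposing each of $AA^{\#}A=A$, $A^{\#}AA^{\#}=A^{\#}$ and $AA^{\#}=A^{\#}A$ and substituting $A^T=-A$ turns them, respectively, into $ABA=A$, $BAB=B$ and $AB=BA$.

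The main, and essentially the only, obstacle is this verification that the group inverse of a skew-symmetric matrix is again skew-symmetric; the rest is a one-line substitution. Once that is in hand, the corollary is an immediate specialization of Theorem \ref{trangemonotone}, with $A$ replaced by $A^{-1}$ in the first two cases and by $A^{\#}$ in the third.
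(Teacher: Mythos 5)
Your proposal is correct and follows the same route as the paper: the paper's justification is precisely the observation that $A^{-1}$ inherits the property $A^2=\pm I$ and that the group inverse of a skew-symmetric matrix exists and is again skew-symmetric, after which Theorem \ref{trangemonotone} applies verbatim. You merely supply the details (e.g.\ $A^{-1}=-A$ when $A^2=-I$, and the uniqueness argument showing $(A^{\#})^T=-A^{\#}$) that the paper leaves implicit, and your verification is accurate.
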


The first example below illustrates (a) of Theorem \ref{trangemonotone}.

\begin{ex}\label{illus_st(a)}
Let $A=
\begin{pmatrix}
~~0 & 1 \\
-1 & 0
\end{pmatrix},$ so that $A^2=-I.$ For any $X=
\begin{pmatrix}
a & b \\
b & c
\end{pmatrix} \in {\cal S}^2,$ we have $$S_A(X)=
\begin{pmatrix}
a-c & 2b \\
2b & c-a
\end{pmatrix}.$$ If $S_A(X) \succeq 0,$ then $a=c$ and so $b=0$. Thus, 
$X=\begin{pmatrix}
a & 0 \\
0 & a
\end{pmatrix}.$ If we impose the condition that $X \in R(S_A),$ then $a= -a$, proving that $X=0.$ Thus, $S_A$ is trivially range monotone.
\end{ex}

The next example illustrates item (b) of Theorem \ref{trangemonotone}.

\begin{ex}\label{illus_st(b)}
Let $A=
\begin{pmatrix}
0 & 1 \\
1 & 0
\end{pmatrix},$ so that $A^2=I.$ For any $X=
\begin{pmatrix}
a & b \\
b & c
\end{pmatrix} \in {\cal S}^2,$ we have $$S_A(X)=
\begin{pmatrix}
a-c & 0 \\
0 & c-a
\end{pmatrix}.$$ If $S_A(X) \succeq 0,$ then $a=c$ and so 
$X=\begin{pmatrix}
a & b \\
b & a
\end{pmatrix}.$ Further, if $X \in R(S_A),$ then $a= -a$ and $b=0$, proving that $X=0.$ Thus, $S_A$ is trivially range monotone.
\end{ex}

Items (a) (for the Lyapunov operator) and (c) of Theorem \ref{trangemonotone} are illustrated, next.

\begin{ex}\label{illus_lyst}
Let $A=
\begin{pmatrix}
~~0 & 1 \\
-1 & 0
\end{pmatrix}.$ Then $A$ is skew-symmetric and $A^2=-I.$ For any $X=
\begin{pmatrix}
a & b \\
b & c
\end{pmatrix} \in {\cal S}^2,$ we have $$L_A(X)=
\begin{pmatrix}
2b & c-a \\
c-a & -2b
\end{pmatrix}.$$ The requirement that $L_A(X) \succeq 0$ yields $b=0$ and so $a=c$. Thus $X=a I$. The condition that $X \in R(L_A)$ then implies that $a=0$, so that $X=0,$ proving the trivial range monotonicity of $L_A.$ 
\end{ex}

\section{Counterexamples}\label{cex}
The Lyapunov operator, is not trivially range monotone, in general, when $A$ is an involutory matrix. 

\begin{ex}\label{invlyo}
Let $A=
\begin{pmatrix}
0 & 1 \\
1 & 0
\end{pmatrix},$ so that $A^2=I$ and for any $X=
\begin{pmatrix}
a & b \\
b & c
\end{pmatrix} \in {\cal S}^2,$ we have $$L_A(X)=
\begin{pmatrix}
2b & a+c \\
a+c & 2b
\end{pmatrix}.$$ Note that, if $X=\begin{pmatrix}
1 & 0 \\
0 & -1
\end{pmatrix},$ then $L_A(X)=0,$ proving that $L_A$ is singular. Next, take $Y=A$ and $U=
\begin{pmatrix}
1 & 0 \\
0 & 0
\end{pmatrix}.$ Then $L_A(U)=Y$ so that $Y \in R(L_A).$ We have $L_A(Y)=2I \succeq 0$, but $Y \nsucceq 0.$ Thus, the Lyapunov operator is not even range monotone, if $A$ is involutory. 
\end{ex}

The conclusion above holds for matrices of higher order, too. For instance, 
if $A$ is involutory and if one sets  $\tilde{A}
=\begin{pmatrix}
A & 0\\
0 & \pm 1
\end{pmatrix},$ then $L_{\tilde{A}}$ not range monotone. We omit the details.

\begin{ex}\label{skewssteinorder2}
Consider the case, when $A^T=-A.$ For $n=1, S_A(X)=X.$ Here, $S_A$ is even monotone. For $n=2,$ (after normalizing) $A$ must be of the following form 
$$\begin{pmatrix}
~0 & \pm 1 \\
\mp 1 & ~0
\end{pmatrix}.$$
Then, for any symmetric matrix $X=\begin{pmatrix}
a & b \\
b & d
\end{pmatrix},$
we have 
$$ S_A(X)=
\begin{pmatrix}
a-d & 2b \\
2b & d-a
\end{pmatrix}.
$$
$S_A$ is singular since $S_A(I)=0.$ Next, if $S_A(X) \succeq 0,$, then $a=d$ and so $b=0$. Further, if $X \in R(S_A),$ then we have $a=-d$, and so $a=d=0,$ proving that $X=0$. This proves the trivial range monotonicity of $S_A,$ for $n=2.$
\end{ex}

In the next two examples, we show that the Stein operator is not trivially range monotone for $n=3,4$, for a skew-symmetric matrix $A$.

\begin{ex}\label{skewsstein1}
Let $A=\frac{1}{\sqrt 2}
\begin{pmatrix}
~0 & ~1 & ~0 \\
-1 & ~0 & ~1 \\
~0 & -1 & ~0 
\end{pmatrix},$ so that for any symmetric matrix $X=
\begin{pmatrix}
a & b & c\\
b & d & e \\
c & e & f 
\end{pmatrix},$ we have 
$$S_A(X)= \frac{1}{2}\begin{pmatrix}
2a-d & \ 3b-e & 2c+d \\
3b-e & 2d+2c-a-f & 3e-2b \\
2c+d & 3e-2b & 2f-d
\end{pmatrix}.$$
$S_A$ is singular, since $S_A(X)=0,$ for $X= 
\begin{pmatrix}
-1 & ~0 & ~1\\
~0 & -2 & ~0\\
~1 & ~0 & -1
\end{pmatrix}.$ Next, if $U=
\begin{pmatrix}
2 & 0 & 1 \\
0 & 1 & 0 \\
1 & 0 & 2
\end{pmatrix}$ and $Y=\frac{1}{2}
\begin{pmatrix}
3 & 0 & 3 \\
0 & 0 & 0 \\
3 & 0 & 3
\end{pmatrix}$ then $S_A(U)=Y$ so that $Y \in \ R(S_A).$ We have 
$S_A(Y)=\frac{1}{2}
\begin{pmatrix}
3 & 0 & 3 \\
0 & 0 & 0 \\
3 & 0 & 3
\end{pmatrix}\succeq 0$ and $Y\succeq 0.$ Since $Y \neq 0$, we conclude that the Stein operator is not trivially range monotone.
\end{ex}

\begin{ex}\label{skewsstein2}
Let $A=
\begin{pmatrix}
~0 & 1 & ~0 & 0 \\
-1 & 0 & ~0 & 0 \\
~0 & 0 & ~0 & 2 \\
~0 & 0 & -2 & 0
\end{pmatrix},$ so that for any symmetric matrix $X=
\begin{pmatrix}
a & b & c & d \\
b & e & f & g \\
c & f & h & i \\
d & g & i & j
\end{pmatrix},$ we have $$S_A(X)=
\begin{pmatrix}
a-e & 2b & c-2g & d+2f \\
2b & e-a & f+2d & g-2c \\
c-2g & f+2d & h-4j & 5i \\
d+2f & g-2c & 5i & j-4h
\end{pmatrix}.$$
It is clear that, if $X=
\begin{pmatrix}
1 & 0 & 0 & 0 \\
0 & 1 & 0 & 0 \\
0 & 0 & 0 & 0 \\
0 & 0 & 0 & 0
\end{pmatrix},$ then $S_A(X)=0,$ proving that $S_A$ is singular. Next, if $Y=
\begin{pmatrix}
0 & 0 & ~0 & ~0 \\
0 & 0 & ~0 & ~0 \\
0 & 0 & -3 & ~0 \\
0 & 0 & ~0 & -3
\end{pmatrix}$ and $U=
\begin{pmatrix}
0 & 0 & 0 & 0 \\
0 & 0 & 0 & 0 \\
0 & 0 & 1 & 0 \\
0 & 0 & 0 & 1
\end{pmatrix},$ then $S_A(U)=Y$ so that $Y \in R(S_A).$ We have $S_A(Y)=
\begin{pmatrix}
0 & 0 & ~0 & ~0 \\
0 & 0 & ~0 & ~0 \\
0 & 0 & ~9 & ~0 \\
0 & 0 & ~0 & ~9
\end{pmatrix} \succeq 0.$ However, $Y \nsucceq 0,$ proving that the Stein operator is not even range monotone.
\end{ex}

\begin{ex}\label{skewsstein3}
For $n = 5,$ consider the skew symmetric block diagonal matrix $B,$ whose leading principal sub-block is $A,$ as in example \ref{skewsstein1} and whose trailing principal sub-block matrix is 
$\begin{pmatrix}
 0 & -1 \\
 1 & ~0
\end{pmatrix}.$ Then, by the argument given as earlier, it may be shown that $S_B$ is not trivially range monotone. This inductive argument allows us to conclude that, {\it in general},  $S_A$ is not trivially range monotone when $A$ is a skew symmetric matrix, of any order $n \geq 3$.   
\end{ex}

This finishes the discussion for skew-symmetric case.  

Next, let $A$ be symmetric. Example \ref{invlyo}, shows that $L_A$ is not trivially range monotone. The following example shows that $S_A$ is not trivially range monotone.

\begin{ex}\label{symstein}
Consider the symmetric matrix $A=
\begin{pmatrix}
1 & 0 \\
0 & 2
\end{pmatrix},$ so that for any $X=
\begin{pmatrix}
a & b \\
b & c
\end{pmatrix}\in \mathcal{S}^2,$ $S_A$ is given by 
$$S_A(X)=
\begin{pmatrix}
~0 & -b \\
-b & -3c
\end{pmatrix}.$$ $S_A$ is not invertible, as $S_A(X)=0,$ for $X=
\begin{pmatrix}
1 & 0\\
0 & 0
\end{pmatrix}.$ Note that, by item $(d)$ of Proposition \ref{gpinvexist}, $S_A^{\#}$ exists. Next, if $Y=
 \begin{pmatrix}
 0 & ~0 \\
 0 & -1
 \end{pmatrix}$ then $S_A(U)=Y$, where $U=
\begin{pmatrix}
0 & 0 \\
0 & \frac{1}{3}
\end{pmatrix},$ showing that $Y \in R(S_A).$ Also, $Y \nsucceq 0,$ whereas, $S_A(Y)=
\begin{pmatrix}
0 & 0 \\
0 & 3
\end{pmatrix}\succeq 0$. 
\end{ex}

We summarize the findings in the following table. Note that, while "Yes" stands for an affirmative answer, "No" means that counterexamples exist to show that the answers are negative, in general.

\begin{center}
\begin{tabular}{|c|c|c|}
\hline
{\bf Matrix classes}  & \multicolumn{2}{c|}{\bf Trivial Range Monotonicity} \\

\cline{2-3}  & $L_A$  & $S_A$  \\
\hline $A^2=-I$ & Yes [Theorem \ref{trangemonotone}(a)] & Yes [Theorem \ref{trangemonotone}(a)]\\
 
\hline $A^2=I$ & No [Example \ref{invlyo}] & Yes [Theorem \ref{trangemonotone}(b)]\\

\hline $A^T=-A$ & Yes [Theorem \ref{trangemonotone}(c)] & Yes ($n=2$) [Example \ref{skewssteinorder2}] \\
& & No ($n\geq 3$) [Examples \ref{skewsstein1}, \ref{skewsstein2}, \ref{skewsstein3}]\\
 
\hline $A^T=A$  & No [Example \ref{invlyo}] & No [Example \ref{symstein}]\\
\hline
\end{tabular}
\end{center}

\section{Concluding Remarks}

We have studied four matrix classes in the context of the notion of trivial range monotonicity of the associated Lyapunov and the Stein operators. The motivation, as was mentioned in the introduction, is to obtain a generalization of the trivial range monotonicity property of singular irredudcible $M$-matrices. It would be interesting to bring the matrix classes for which we could obtain affirmative results, under a more general framework. Similarly, to unify the matrix classes for which negative results for operator analogues, have been proved. It would be another interesting question to study the notion of irreducibility of the Lyapunov operator and the Stein operator. It is pertinent to point to the fact that there are different (possibly nonequivalent) ways of defining irreducibility of a "positive" operator. So one could propose such a notion for the Stein operator (which is the difference: $I$ minus a positive operator $X \rightarrow AXA^T$), simply by introducing the assumption of irreducibility for the second term, which is positive (meaning, that the matrix $AXA^T$ is symmetric and positive semidefinite, whenever so is $X$). However, it is not clear how one could propose irreducibility for the Lyapunov operator and the likes of it. While we will pursue these questions in future, we note that, motivated by the considerations of this article, the notion of irreducibility has been investigated for $Z$-operators over Euclidean Jordan Algebras \cite{gowdanew}.

\section{Acknowledgements}
Samir Mondal acknowledges funding received from the Prime Minister's Research Fellowship (PMRF), Ministry of Education, Government of India, for carrying out this work. The first and the third authors thank ASEM-DUO for financial support enabling the former's visit to India and the latter, to Spain. The first author was also partially supported by the Spanish I+D+i program under project PID2021-122501NB-I00. The authors thank Prof. M.S. Gowda for his suggestions and comments that have helped in a clearer presentation of the material. He suggested the nomenclature "positive stable $Z$-operators".

\end{document}